\numberwithin{equation}{section}
\def\inv{\iota}
\def\binv{\overline{\iota}}
\def\R{\mathbb R}
\def\Z{\mathbb Z}
\def\P{\mathbb P}
\def\Fp{\mathbb F_p}
\def\ee{\varepsilon}
\DeclareMathOperator\GL{GL}
\DeclareMathOperator\SL{SL}
\newtheorem{theorem}{Theorem}[section]
\newtheorem{lemma}[theorem]{Lemma}
\newtheorem{proposition}[theorem]{Proposition}
\newtheorem{corollary}[theorem]{Corollary}
\theoremstyle{remark}
\newtheorem{remark}[theorem]{Remark}
\theoremstyle{definition}
\theoremstyle{remark}
\numberwithin{equation}{section}
\begin{document}
\title{Mixing time of fractional random walk on finite fields }
\author{Jimmy He}
\address{Department of Mathematics, Stanford University, Stanford, CA, USA}
\email{jimmyhe@stanford.edu}
\author{Huy Tuan Pham}
\address{Department of Mathematics, Stanford University, Stanford, CA, USA}
\email{huypham@stanford.edu}
\author{Max Wenqiang Xu}
\address{Department of Mathematics, Stanford University, Stanford, CA, USA}
\email{maxxu@stanford.edu}
\maketitle
\begin{abstract}
    We study a random walk on $\mathbb{F}_p$ defined by $X_{n+1}=1/X_n+\varepsilon_{n+1}$ if $X_n\neq 0$, and $X_{n+1}=\varepsilon_{n+1}$ if $X_n=0$, where $\varepsilon_{n+1}$ are independent and identically distributed. This can be seen as a non-linear analogue of the Chung--Diaconis--Graham process. We show that the mixing time is of order $\log p$, answering a question of Chatterjee and Diaconis \cite{CD20}.
\end{abstract}
\section{Introduction}
This paper studies a non-linear analogue of the Chung--Diaconis--Graham process, defined on $\Fp$ by
\begin{equation*}
    X_{n+1}=aX_n+\ee_{n+1},
\end{equation*}
where $a\in\Fp^\times$ is fixed and the $\varepsilon_i$ are independent and identically distributed. The mixing time of this Markov chain has been extensively studied, and it is now known that for certain $a$ (such as $a=2$) and almost all $p$, cutoff occurs at time $c\log p$ for an explicit constant $c$ \cite{EV}. 

Since simple random walk on $\Fp$ requires order $p^2$ steps to mix (see \cite{LP17} for example), the Chung--Diaconis--Graham process provides an explicit example of a random walk where applying a deterministic bijection (in this case, $x\mapsto ax$) between steps of the walk exponentially speeds up mixing. This was studied in \cite{CD20}, where it was asked whether other explicit examples could be provided.

In this paper, we consider the random walk on $\Fp$ defined by
\begin{equation*}
    X_{n+1}=\iota(X_n)+\ee_{n+1},
\end{equation*}
where $\iota(x)=1/x$ if $x\neq 0$ and $\iota(0)=0$, and the $\varepsilon_i$ are independent and identically distributed. Chatterjee and Diaconis asked about the order of the mixing time for this walk \cite{CD20}, which was originally suggested by Soundararajan. We solve this problem by showing that this random walk mixes in order $\log p$ steps. Since simple random walk on $\Fp$ mixes in order $p^2$ steps, this gives an exponential speedup for the mixing time, and provides an explicit example of when adding a deterministic function between steps of a Markov chain gives an exponential speedup. 

This model is similar in spirit to random walk on $\Fp$ given by moving from $x$ to one of $x+1$, $x-1$ or $x^{-1}$, which is known to be an expander (see Theorem 8.8 of \cite{W19} for example). Our results are not directly implied by this, but similar ideas underlie the results for both models. We discuss the relationship with this model in more detail in Remark \ref{rmk: symm model}.

The proof uses comparison theory developed by Smith \cite{S15} to relate the random walk on $\Fp$ to a random walk on the projective line $\P^1(\Fp)$. The walk on $\P^1(\Fp)$ is the quotient of a random walk on $\SL_2(\Fp)$ which is known to have a constant order spectral gap by results of Bourgain and Gamburd \cite{BG}. While our methods identify the order of the mixing time, they are not strong enough to identify the constant or obtain cutoff, and we leave this as an open problem.

Our result also implies bounds on the number of solutions to the congruence $xy\equiv 1\pmod{p}$ for $x\in I$ and $y\in J$ intervals of the same length, although these bounds are weaker than the ones established in \cite{CG11}.

\subsection{Main results}
We now formally state our main results. For two probability measures $\mu$ and $\nu$, let
\begin{equation*}
    \|\mu-\nu\|_{TV}=\sup_{A}|\mu(A)-\nu(A)|
\end{equation*}
denote the \emph{total variation distance}. Let $X_n$ be an ergodic Markov chain with transition matrix $P$ and stationary distribution $\pi$, . Let $P^n(x,y)$ denote the probability that $X_n=y$ given that $X_0=x$. Then the \emph{mixing time} is defined to be
\begin{equation*}
    t_{mix}(\ee)=\inf\left\{n\colon \sup _{x}\|P^n(x,\cdot)-\pi\|_{TV}\leq \ee\right\}.
\end{equation*}

Let $\inv:\Fp\to \Fp$ be the  function defined by $\inv(x)=1/x$ if $x\ne 0$ and $\inv(0)=0$. Let $\ee_i$ be independent and identically distributed on $\Z$. Let $K$ denote the transition matrix for the Markov chain on $\Fp$ defined by
\begin{equation}
\label{eq: walk defn}
    X_{n+1} =  \inv(X_n) + \ee_{n+1}.
\end{equation}
We show that for this Markov chain, $t_{mix}(\ee)$ is $\Theta(\log p)$, where the implied constants are allowed to depend on both $\mu$ and $\ee$. Informally, this means that order $\log p$ steps are necessary and sufficient for the Markov chain to converge to its stationary distribution. The following theorems give the desired lower and upper bounds on $t_{mix}$ respectively
.
\begin{theorem}[Lower bound]
\label{thm: lower bound}
Let $\mu$ denote a probability measure on $\Z$, and assume that
\begin{equation*}
    H(\mu)=\sum -\mu(x)\log \mu(x)<\infty.
\end{equation*}
Let $K$ be the transition matrix for the Markov chain on $\Fp$ defined by
\begin{equation*}
    X_{n+1}=\inv(X_n)+\ee_{n+1},
\end{equation*}
where the $\ee_n$ are independent and identically distributed according to $\mu$. Let $\pi$ denote the uniform distribution on $\Fp$. Then for all $x$ in $ \Fp$ 
\begin{equation*}
    \|K^n(x,\cdot)-\pi\|_{TV}\geq 1-\frac{nH(\mu)+\log 2}{\log p}.
\end{equation*}
\end{theorem}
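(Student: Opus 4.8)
The plan is to combine a soft entropy bound with an elementary information-theoretic lemma. The key observation is that, started from a deterministic point $X_0=x$, iterating \eqref{eq: walk defn} expresses $X_n$ as a deterministic function $X_n=G(\ee_1,\dots,\ee_n)$ of the innovations alone, for some map $G=G_x\colon\Z^n\to\Fp$ (which absorbs the reduction mod $p$). Hence $K^n(x,\cdot)$ is the pushforward of $\mu^{\otimes n}$ under $G$, and since Shannon entropy does not increase under deterministic maps while being additive over independent coordinates,
\begin{equation*}
    H\big(K^n(x,\cdot)\big)\le H(\ee_1,\dots,\ee_n)=nH(\mu)<\infty .
\end{equation*}
Thus any measure the chain can reach in $n$ steps has entropy at most $nH(\mu)$, and it remains to show that a probability measure on $\Fp$ whose entropy is much smaller than $\log p$ must be far from uniform.

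For the latter I would isolate the following lemma: if $\nu$ is a probability measure on a finite set $\Omega$ with $|\Omega|=N$ and $\pi$ is uniform on $\Omega$, then $\|\nu-\pi\|_{TV}\ge 1-\frac{H(\nu)+\log 2}{\log N}$. To prove it, set $d=\|\nu-\pi\|_{TV}$ and assume $0<d<1$ (the endpoints being trivial). Use the standard decomposition $\nu=(1-d)\rho+d\,\sigma$, where $(1-d)\rho=\min(\nu,\pi)$ is the overlap measure, normalized to a probability measure, and $\sigma$ is a probability measure supported on $\{\nu>\pi\}$. Concavity of entropy gives $H(\nu)\ge (1-d)H(\rho)+d\,H(\sigma)\ge (1-d)H(\rho)$. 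Now every atom of $\rho$ is at most $\frac{1}{N(1-d)}$ since $(1-d)\rho(\omega)=\min(\nu(\omega),1/N)\le 1/N$; moreover the overlap mass $1-d=\sum_\omega\min(\nu(\omega),1/N)$ is always at least $1/N$ (as a function of $\nu$ it is concave on the simplex, hence minimized at a point mass), so $N(1-d)\ge 1$ and $H(\rho)\ge\log\big(N(1-d)\big)\ge 0$. Therefore
\begin{equation*}
    H(\nu)\ge (1-d)\log\big(N(1-d)\big)=(1-d)\log N+(1-d)\log(1-d)\ge (1-d)\log N-\log 2,
\end{equation*}
using $-(1-d)\log(1-d)\le 1/e<\log 2$ on $[0,1]$; rearranging gives the lemma.

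Applying the lemma with $N=p$ and $\nu=K^n(x,\cdot)$, and feeding in $H(K^n(x,\cdot))\le nH(\mu)$, yields $\|K^n(x,\cdot)-\pi\|_{TV}\ge 1-\frac{nH(\mu)+\log 2}{\log p}$ for every $x\in\Fp$, which is exactly the claim; in particular it forces $t_{mix}(\ee)=\Omega(\log p)$ for fixed $\ee<1$, since the right-hand side is positive as long as $n\lesssim \log p/H(\mu)$. I do not anticipate a serious obstacle. The two genuinely substantive points are making the entropy-monotonicity step clean when $\mu$ has infinite support (this is exactly where the hypothesis $H(\mu)<\infty$ is used), and extracting the sharp constant $\log 2$: the concavity-plus-overlap argument above does this automatically, whereas a cruder estimate based on a single threshold set $\{\nu(\omega)\ge\lambda/N\}$ loses an additive $O(1)$ that seems hard to sharpen to $\log 2$.
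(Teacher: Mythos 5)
Your proposal is correct, and its overall strategy coincides with the paper's: bound the entropy of $K^n(x,\cdot)$ by $nH(\mu)$, then invoke an inequality saying that a measure on $\Fp$ with entropy well below $\log p$ must be far from uniform in total variation. The entropy-growth step is the same in substance (you phrase it as data processing applied to $X_n=G_x(\ee_1,\dots,\ee_n)$, the paper phrases it via subadditivity, bijectivity of $\iota$, and $H(\mu_p)\le H(\mu)$; these are equivalent). Where you genuinely diverge is the key lemma: the paper simply cites a Fano-type inequality (Equation 11 of the reference it denotes [A07]), namely $|H(\pi)-H(\nu)|\le \delta\log(p-1)-\delta\log\delta-(1-\delta)\log(1-\delta)$, and then weakens it to $\delta\log p+\log 2$, whereas you give a self-contained proof via the overlap decomposition $\nu=(1-d)\rho+d\sigma$ with $(1-d)\rho=\min(\nu,\pi)$, concavity of entropy, the bound $\rho(\omega)\le \tfrac{1}{p(1-d)}$, and $-(1-d)\log(1-d)\le 1/e<\log 2$; all of these steps check out (including the extreme-point argument showing $1-d\ge 1/p$). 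Your route buys a short elementary proof that avoids the external citation and still recovers the exact constant $\log 2$ in the statement; the cited inequality is slightly sharper (it keeps the binary-entropy term and $\log(p-1)$), which matters only if one wants marginally better lower-order terms, not for the theorem as stated.
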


\begin{theorem}[Upper bound]
\label{thm: upper bound}
Let $\mu$ denote a probability measure on $\Z$ whose support contains at least two values. Let $K$ be the transition matrix for the Markov chain on $\Fp$ defined by
\begin{equation*}
    X_{n+1}=\inv(X_n)+\ee_{n+1},
\end{equation*}
where the $\ee_n$ are independent and identically distributed according to $\mu$. Let $\pi$ denote the uniform distribution on $\Fp$. Then there exists a constant $C>0$ depending only on $\mu$ such that for all $x$ in $ \Fp$ and sufficiently large $p$,
\begin{equation*}
    \|K^n(x,\cdot)-\pi\|_{TV}\leq \frac{\sqrt{p}}{2}e^{-Cn}.
\end{equation*}
\end{theorem}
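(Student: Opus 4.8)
The plan is to conjugate the walk onto the projective line, where one step becomes a genuine element of $\mathrm{PGL}_2(\Fp)$ carrying a Bourgain--Gamburd spectral gap, and then transport that gap back to $\Fp$ by comparison. First, since $\pi$ is uniform, for every $x$
\[
\|K^n(x,\cdot)-\pi\|_{TV}\le \tfrac12\left\|\frac{K^n(x,\cdot)}{\pi}-1\right\|_{L^2(\pi)}\le \tfrac{\sqrt p}{2}\,\|K^n\|_0 ,
\]
where $\|\cdot\|_0$ is the operator norm on mean-zero functions in $L^2(\pi)$ and we used $\|\delta_x/\pi-1\|_{L^2(\pi)}\le\sqrt p$. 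By submultiplicativity it suffices to produce a fixed $m$ and a constant $\rho<1$, depending only on $\mu$, with $\|K^m\|_0\le\rho$ for all large $p$: then $\|K^n\|_0\le\rho^{\,n/m-1}$, and choosing any $C$ slightly below $m^{-1}\log(1/\rho)$ gives the theorem (the stated bound being vacuous once $n$ drops below a constant multiple of $\log p$). Equivalently, I want a uniform lower bound on the spectral gap of the reversible operator $K^m(K^m)^{*}$, where $K^{*}$ is the time-reversal (the adjoint in $L^2(\pi)$).

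Away from $0$, the map $x\mapsto \inv(x)+c$ is the Möbius action on $\P^1$ of $g_c=\left(\begin{smallmatrix}c&1\\1&0\end{smallmatrix}\right)$, which moreover sends $0\mapsto\infty$ and $\infty\mapsto c$. Let $\tilde K$ be the chain $\tilde X_{n+1}=g_{\ee_{n+1}}\cdot\tilde X_n$ on $\P^1(\Fp)$; it is the image under $g\mapsto g\cdot\infty$ of the left random walk on $\mathrm{PGL}_2(\Fp)$ with step law $\nu$, the pushforward of $\mu$. Pulling functions back along $g\mapsto g\cdot\infty$ is an isometry onto a subspace of $L^2(\mathrm{PGL}_2(\Fp))$ that is invariant under the group-walk operator and its adjoint and intertwines them, so $\|\tilde K^m\|_0$ is dominated by the corresponding quantity for the group walk, hence by a spectral-gap quantity for a suitable \emph{symmetric} convolution operator on the group. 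Fix $a\ne b$ in $\operatorname{supp}\mu$; then $g_ag_b^{-1}$ and $g_b^{-1}g_a$ are the non-commuting unipotents $\left(\begin{smallmatrix}1&a-b\\0&1\end{smallmatrix}\right)$ and $\left(\begin{smallmatrix}1&0\\a-b&1\end{smallmatrix}\right)$, so $\langle g_c:c\in\operatorname{supp}\mu\rangle$ is a non-elementary subgroup of $\mathrm{PGL}_2(\mathbb Z)$. The one-step multiplicative reversibilization only sees $g_ag_b^{-1}$ and its inverse, which generate a unipotent group, so I take $m=2$: the support of the symmetric measure attached to the reversibilization of $\tilde K^2$ contains two unipotents with distinct fixed points and therefore still generates a non-elementary group. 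For all $p>|a-b|$ this support reduces to a symmetric generating set of a bounded-index subgroup of $\mathrm{PGL}_2(\Fp)$, so by Bourgain--Gamburd \cite{BG} the resulting Cayley graphs form a uniform family of expanders; since the atoms of the measure are bounded below uniformly in $p$, this gives $\|\tilde K^2\|_0\le\sqrt{1-\delta}$ with $\delta>0$ depending only on $\mu$.

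It remains to compare $K$ on $\Fp$ with $\tilde K$ on $\P^1(\Fp)$, which is the heart of the argument and where the comparison theory of Smith \cite{S15} is applied. The two chains agree at every state except $0$: from $0$, $K$ jumps to $\ee$, whereas $\tilde K$ moves deterministically to $\infty$ and only then to $\ee$. With $m=2$, every transition of the reversibilization $K^2(K^2)^{*}$ is either a transition of $\tilde K^2(\tilde K^2)^{*}$ of comparable weight, or one that visits $0$ and can be rerouted along a path of bounded length in $\P^1$ through $\infty$; since the affected transitions carry only an $O(1/p)$ share of the stationary mass and $\infty$ handles them with $O(1)$ congestion, Smith's comparison inequality yields $\operatorname{gap}(K^2(K^2)^{*})\ge c\,\operatorname{gap}(\tilde K^2(\tilde K^2)^{*})\ge c\,\delta$ for all large $p$, with $c>0$ absolute. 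Feeding this into the first paragraph, with $m=2$ and $\rho=\sqrt{1-c\delta}$, proves the theorem.

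The last step is the only genuinely delicate one. A direct coupling is useless here: the obvious coupling of $K$ and $\tilde K$ driven by the same $\ee_i$ keeps the walks together only until they reach $0$, and from suitable starting points they reach $0$ --- and hence split --- within a bounded number of steps with probability bounded away from $0$, so neither a coupling nor a projection/lumping argument can give exponentially small total variation. One is forced to compare Dirichlet forms of these non-reversible chains across state spaces of different sizes, and the real work is to show that the single defect at $0$ is absorbed with uniformly bounded congestion, so that the spectral gap transfers with no loss of order --- precisely the situation for which Smith's framework was built.
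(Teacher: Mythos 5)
Your proposal follows essentially the same route as the paper: reduce total variation to an $\ell^2$ operator-norm bound for a reversibilized power of $K$, realize the projective-line chain as a quotient of a random walk on $\SL_2(\Fp)$ whose generators come from a non-elementary subgroup of $\SL_2(\Z)$ (your two unipotents $g_ag_b^{-1}$ and $g_ag_bg_c^{-1}g_a^{-1}$, with distinct fixed points, are up to normalization exactly the paper's set $S$ in Lemma \ref{lem: Zariski dense}), invoke the Bourgain--Gamburd spectral gap \cite{BG}, and transfer the gap from $\P^1(\Fp)$ back to $\Fp$ by Smith's comparison \cite{S15}; your extraction of a uniform symmetric component with weight bounded below is the paper's decomposition $uL_0+(1-u)L'$.

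The one step that is not right as written is the comparison itself, which you correctly single out as the heart of the argument but then describe with the roles of the two chains interchanged. In Smith's framework (Theorem \ref{thm: comparison}), to conclude that the gap of the chain on the smaller space $\Fp$ is at least a constant times the gap of the chain on $\P^1(\Fp)$, one must route every transition of the $\P^1(\Fp)$ chain --- in particular all transitions incident to $\infty$, and all transitions distorted by $\binv(0)=\infty$ versus $\inv(0)=0$ --- along paths built from transitions of the $\Fp$ chain, with uniformly bounded length and congestion; rerouting the $\Fp$ chain's $0$-visiting transitions ``through $\infty$ in $\P^1$'' is the comparison in the opposite direction and would bound the wrong gap. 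Moreover, the substance of the step is the verification that such $\Fp$-paths exist: away from a bounded set of exceptional pairs, positivity of the $\P^1$ transition must force positivity of the corresponding $\Fp$ transition, and short detours must be supplied for the exceptions (in the paper this is Lemma \ref{lem: graph inclusion} and the explicit flow in Proposition \ref{prop: comp}, e.g.\ a length-two path replacing the self-loop at $-a_1$ and length-two paths joining the two states adjacent to $\infty$). Finally, the quantity controlling Smith's constant is the congestion factor $1/(P_0(x,y)\pi_0(x))$ weighted by the flow through each edge, not the stationary mass of the affected transitions, so the ``$O(1/p)$ share of stationary mass'' remark does not by itself do any work. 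None of this undermines your architecture --- it is the paper's --- but the comparison paragraph must be carried out in the correct direction and in detail, and it is cleaner to first extract a four-generator symmetric sub-walk (the paper's $L_0$) than to compare the full reversibilizations $K^2(K^2)^*$ and $\tilde K^2(\tilde K^2)^*$, which would require checking a considerably larger family of transitions.
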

The lower bound follows easily from entropy considerations. We make an assumption that the entropy of $\mu$ is finite, which cannot be removed entirely (see Remark \ref{rmk: necc of finite entropy}). However, this assumption is relatively weak, and indeed all finitely supported distributions have finite entropy.

We now describe our plan to prove the upper bound. We establish the upper bound by connecting the Markov chain $X_n$ with a Markov chain on $\P^{1}(\Fp)$, which in turn is a projection of a Markov chain on a Cayley graph of $\SL_2(\Fp)$. We next sketch this connection. 

Recall that given a group $G$ and a generating set $S$, we define the \emph{Cayley graph} $\mathcal{G}(G, S)$ as the graph with vertex set $G$ and two elements $x$ and $y$ are connected if and only if $x= \sigma y$ for some $\sigma \in S$. We use $A(\mathcal{G})$ to denote the normalised adjacency matrix of $\mathcal{G}$, whose largest eigenvalue is 1. Let $S$ be a subset of $\SL_2(\Z)$ so that $S_p$ generates $\SL_2(\Fp)$, where $S_p$ is the set $S$ mod $p$. Results of Bourgain and Gamburd \cite{BG} give conditions for when $\mathcal{G}(\SL_2(\Fp),S_p)$ has a constant spectral gap, i.e., $\lambda_2(A(\mathcal{G}(\SL_2(\Fp),S_p)))\le 1-c$ for some constant $c>0$ independent of $p$, and thus also a $O(\log p)$ mixing time.

The random walk on $\mathcal{G}(\SL_2(\Fp),S_p)$ has a natural projection on $\P^{1}(\Fp)$, defined by the action 
\[ \begin{pmatrix}
a & b  \\
c & d \\
\end{pmatrix} \cdot x = \frac{ax+b}{cx+d}. \]
This can also be seen as a random walk on the Schreier graph corresponding to this action. Since the spectral gaps of the walks on $\SL_2(\Fp)$ and $\P^1(\Fp)$ are related, this gives an $O(\log p)$ mixing time for the walk on $\P^1(\Fp).$



Finally, we connect the Markov chain on $\P^{1}(\Fp)$ with the fractional Markov chain $X_n$ on $\Fp$. With an appropriate choice of the set $S$, the Markov chain on $\P^{1}(\Fp)$ induced by the random walk on $\mathcal{G}(\SL_2(\Fp),S_p)$ and $X_n$ have very similar transition probabilities. The mixing time bound of $X_n$ can thus be obtained from that of the chain on $\P^{1}(\Fp)$ via comparison theory of Markov chains on different state spaces. 


A key input to the above plan is the spectral gap of random walks on Cayley graphs of $\SL_2(\Fp)$, which we discuss in further detail in Section \ref{sec: rw on cayley graphs}.

\begin{remark}
\label{rmk: symm model}
The random walk on $\Fp$ obtained by moving from $x$ to $x+1$, $x-1$ or $x^{-1}$ with equal probability has been previously studied. In Section 3.3 of \cite{S90}, the walk on $\P^1(\Fp)$ is shown to have a constant order spectral gap, and Theorem 8.8 of \cite{W19} states that the same is true on $\Fp$. This model can be seen as an explicit version of the speedup proposed in \cite{HSS20}, who study random walks on graphs with an additional random perfect matching.

The proofs in both models also follow similar ideas, although the random walk we study is more challenging due to the lack of reversibility. In particular, our results imply that the walks above have a constant spectral gap, while the reverse is not true. Theorem 8.8 of \cite{W19} is stated without proof, and we could not find an explanation for the reduction between the study of the random walk on $\Fp$ to $\P^1(\Fp)$ in the literature, and so we note that the obvious analogue of Proposition \ref{prop: comp} in this setting would give a proof.
\end{remark}

\subsection{Bounds on solutions to a congruence equation}
We also give an application of these ideas to bounding the number of solutions to the congruence
\begin{equation*}
    xy\equiv 1\pmod{p}
\end{equation*}
with $x\in I$ and $y\in J$ for intervals $I$ and $J$ of the same length $m\leq p/2$. In particular, for large enough $p$ and $m$, we establish that
\begin{equation*}
    |\{(x,y)\in I\times J\mid xy\equiv 1\pmod{p}\}|\leq (1-\delta)m
\end{equation*}
for some absolute constant $\delta>0$ (see Theorem \ref{thm: application}). The proof is simple given the spectral gap estimates in the proof of Theorem \ref{thm: upper bound}. 

Our bound on the number of solutions to the congruence $xy\equiv 1\pmod{p}$ is weaker than the bounds obtained in \cite{CS10} and \cite{CG11} for intervals that are not too large and weaker than the standard ones coming from estimates on incomplete Kloosterman sums for intervals that are not too small. The number of solutions has also been estimated in \cite{G06, CZ17, Shk}. In particular, the method used in \cite{Shk} also relies on $\SL_2$ action.  While this result is not new, the proof is straightforward and we hope that our ideas can lead to further applications. 

\subsection{Related work}
The random walk we study may be viewed as a non-linear version of the Chung--Diaconis--Graham process, or the $ax+b$ process, which is the random walk on $\Fp$ (or more generally $\Z/n\Z$ for composite $n$) defined by
\begin{equation*}
    X_{n+1}=aX_n+\ee_{n+1},
\end{equation*}
where $a\in\Fp^\times$ is fixed and the $\ee_i$ are independent and identically distributed, drawn from some distribution. It was introduced in \cite{CDG}, where the case of $a=2$ and $\ee_i$ distributed uniformly on $\{-1,0,1\}$ was studied in detail. They showed that the mixing time was at most order $\log p\log\log p$, and that for almost all odd $p$, order $\log p$ steps were sufficient, although for infinitely many odd $p$,  order $\log p\log \log p$ steps were necessary. This process was subsequently studied in \cite{H06, H09, N11, H19, BV19, EV}. In particular, Eberhard and Varj\'u recently showed in \cite{EV} that for almost all $p$, the walk exhibits cutoff at $c\log p$ for some explicit constant $c\approx 1.01136$.

The Chung--Diaconis--Graham process gives an example of a speedup phenomenon that occurs when applying bijections between steps of a Markov chain. This was studied in \cite{CD20}, where it was shown that for a Markov chain on a state space with $n$ elements, almost all bijections, when applied between steps of a Markov chain, cause the chain to mix in $O(\log n)$ steps. Our work gives another example of this phenomenon, which fits into a broader theme of studying how convergence can be sped up, via either deterministic or random means \cite{HSS20, BQZ20, ABLS07}.

In \cite{He20}, the first author studied a more general non-linear version of the Chung--Diaconis--Graham process, defined by
\begin{equation*}
    X_{n+1}=f(X_n)+\ee_{n+1},
\end{equation*}
where $f$ is a bijection on $\Fp$. It was shown that for functions $f(x)$ which were extensions of rational functions, the mixing time was of order at most $p^{1+\ee}$ for any $\ee>0$, where the implicit constant depends only on $\ee$ and the degree of the polynomials appearing in $f(x)$. In particular, this applies for the function $\iota$ that we consider. However, the only known lower bound on the mixing time is of order $\log p$. Our work closes the large gap between the upper and lower bounds in this special case.

\subsection{Random walk on Cayley graphs of \texorpdfstring{$\SL_2(\Fp)$}{SL2(Fp)}}
\label{sec: rw on cayley graphs}

Selberg's Theorem \cite{Selberg} shows that if $S$ is a subset of $\SL_2(\mathbb{Z})$ such that $S$ generates a subgroup of $\SL_2(\mathbb{Z})$ of finite index, then $$\lim \sup_{p\to \infty} \lambda_2(A(\mathcal{G}(\SL_2(\Fp),S_p)))<1.$$ Bourgain and Gamburd \cite{BG} strengthen this result and show that the above property holds if and only if $S$ generates a non-elementary subgroup of $\SL_2(\mathbb{Z})$. 
Weigel \cite{W96} gives a convenient characterization that $\langle S\rangle$ is non-elementary if and only if $\langle S_p\rangle = \SL_2(\Fp)$ for some prime $p\ge 5$ (see also \cite{R10}). 

We remark that a simple reduction allows us to deduce Theorem \ref{thm: upper bound} from the special case where the distribution of $\ee_n$ is supported on two points. In this case, we can construct the set $S$ corresponding to the fractional Markov chain and easily verify that $\langle S\rangle$ is non-elementary. 

Golumbev and Kamper \cite{GK} show that under more stringent assumptions on the set of generators $S$, the Cayley graphs $\mathcal{G}(\SL_2(\Fp),S_p)$ have a stronger property that implies cut-off for the associated projected random walk on $\P^{1}(\Fp)$. This property also holds with high probability for random generating sets $S_p$. We also remark that the result of Bourgain-Gamburd has been further developed in various aspects, e.g., \cite{BV, BreuillardGamburd, GV, GK}. These results also have many applications in number theory, including sum product problems on finite fields  \cite{BGS, Helf08}. 

\subsection{Outline}
In Section \ref{sec: lower bound}, we prove Theorem \ref{thm: lower bound}. In Section \ref{sec: comparison}, we explain the comparison theory for Markov chains used to relate the spectral gaps for the walks on $\Fp$ and $\P^1(\Fp)$. In Section \ref{sec: proof}, we give a proof of Theorem \ref{thm: upper bound}. Finally, in Section \ref{sec: application}, we give a bound for the number of solutions to $xy\equiv 1\pmod{p}$ in a square $I_1\times I_2\subseteq \mathbb{F}_p^2$.

\subsection{Notations}
Throughout, we let $\inv:\Fp\to \Fp$ be the function defined by $\inv(x)=1/x$ if $x\ne 0$ and $\inv(x)=0$ if $x=0$. We view $\P^1(\Fp)$ as a superset of $\Fp$ with one extra element $\infty$. We define the function $\binv:\P^1(\Fp)\to \P^1(\Fp)$ by $\binv(x)=1/x$ if $x\ne 0,\infty$, and $\binv(0)=\infty$ and $\binv(\infty)=0$. 

If $P$ is a symmetric matrix, we let $\lambda_i(P)$ denote the $i$th largest eigenvalue of $P$.

\section{Lower bound}
\label{sec: lower bound}
In this section, we prove Theorem \ref{thm: lower bound}, which follows from entropy considerations. For a discrete probability measure $\mu$, we let 
\begin{equation*}
    H(\mu)=\sum -\mu(x)\log \mu(x)
\end{equation*}
denote the \emph{entropy} of $\mu$, with the convention that $0\log 0=0$. If $X$ is a random variable, we let $H(X)$ denote the entropy of the law of $X$. We need two basic properties of entropy. The first is that it is subadditive, in the sense that if $X$ and $Y$ are independent, then $H(X+Y)\leq H(X)+H(Y)$. The second is that for any function $f$, $H(f(X))\leq H(X)$, with equality if $f$ is bijective.

The lower bound follows by noting that the entropy of the random walk at time $n$ is at most $nH(\mu)$, which is too low to be close to uniform if $n$ is too small. This idea is certainly not new, and similar arguments have appeared before, see \cite{EV,A83, BCS19} for example. Our argument follows \cite{A83}, although we fill in some details and fix a minor error.

The following lemma shows that measures close to uniform in total variation must have large entropy. It was stated without the $\log 2$ term in \cite{A83}, but this cannot be correct, as can be seen if $\mu$ is uniform and $\nu$ is concentrated at a single point. It can be seen as a special case of Theorem 1 of \cite{A07} (in fact, it is established as part of the proof).
\begin{lemma}
\label{lem: entropy}
Let $X$ be a finite set of size $n$, and let $\pi$ denote the uniform measure on $X$. Let $\nu$ be any probability measure on $X$, and let $\delta=\|\nu-\pi\|_{TV}$. Then
\begin{equation*}
\begin{split}
    |H(\mu)-H(\nu)|&\leq \delta \log(n-1)-\delta\log \delta-(1-\delta)\log(1-\delta)
    \\&\leq \delta\log n+\log(2).
\end{split}
\end{equation*}
\end{lemma}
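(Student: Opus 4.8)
The plan is to deduce the lemma from the general fact that for \emph{any} two probability measures $p$ and $q$ on a finite set of size $n$, writing $\delta=\|p-q\|_{TV}$ and $h(\delta)=-\delta\log\delta-(1-\delta)\log(1-\delta)$ for the binary entropy function, one has
\[
|H(p)-H(q)|\le \delta\log(n-1)+h(\delta).
\]
Granting this, the lemma follows immediately by taking $p=\pi$ (so that $H(p)=\log n$) and $q=\nu$, and then estimating $\log(n-1)\le\log n$ together with $h(\delta)\le\log 2$ (the binary entropy is maximized at $\delta=\tfrac12$, where it equals $\log 2$).

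To prove the displayed inequality I would invoke the maximal coupling of $p$ and $q$. We may assume $\delta>0$, the case $\delta=0$ being trivial. Let $(X,Y)$ be the coupling in which, with probability $1-\delta$, we set $X=Y$ drawn from the normalized common part $r\propto\min(p,q)$, and with probability $\delta$ we draw $X$ from $p'\propto(p-q)_+$ and $Y$ from $q'\propto(q-p)_+$ \emph{independently}; then $X\sim p$, $Y\sim q$, and $\Pr[X\ne Y]=\delta$. The crucial structural point is that $p'$ and $q'$ have disjoint supports inside the $n$-element set, and both of these supports are nonempty since $\delta>0$; hence $p'$ is supported on at most $n-1$ points, so that $H(p')\le\log(n-1)$.

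Now set $Z=\mathbf{1}[X\ne Y]$, which is Bernoulli$(\delta)$; the conditional law of $X$ given $Z=1$ is $p'$, that of $Y$ given $Z=1$ is $q'$, and both conditional laws given $Z=0$ equal $r$. Using $H(X)\le H(X,Z)$ together with the chain rule, and then that conditioning cannot increase entropy,
\[
H(X)\le H(Z)+H(X\mid Z)=h(\delta)+\delta H(p')+(1-\delta)H(r),\qquad
H(Y)\ge H(Y\mid Z)=\delta H(q')+(1-\delta)H(r).
\]
Subtracting, the terms $(1-\delta)H(r)$ cancel, and discarding $\delta H(q')\ge0$ gives $H(X)-H(Y)\le h(\delta)+\delta H(p')\le h(\delta)+\delta\log(n-1)$. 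Exchanging the roles of $p$ and $q$ yields the reverse inequality, proving the claim.

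I do not expect a genuine obstacle: the argument is short and elementary once the maximal coupling is in hand. The only mild subtleties are (i) the observation that the ``excess'' measures $p'$ and $q'$ are supported on disjoint sets, which is exactly what upgrades the naive bound $H(p')\le\log n$ to $\log(n-1)$ and makes the inequality sharp (as one sees by taking $p$ a point mass and $q$ uniform on the complement), and (ii) the bookkeeping of the degenerate cases $\delta\in\{0,1\}$ and the convention $0\log 0=0$, both of which are immediate. An alternative, coupling-free route would be to minimize $H(\nu)$ directly over $\{\nu:\|\nu-\pi\|_{TV}\le\delta\}$ using concavity of entropy, but the coupling argument is cleaner and already delivers the stated form.
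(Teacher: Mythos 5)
Your argument is correct. The paper does not actually prove this lemma: it cites it as Equation 11 of \cite{A07} (Audenaert's sharp continuity estimate for entropy, of which this is the classical case), so your coupling proof is a genuinely different, self-contained route. The key steps all check out: the maximal coupling has $\Pr[X\ne Y]=\delta$ with conditional laws $p'$, $q'$, $r$ as you describe; the chain-rule/conditioning inequalities $H(X)\le H(Z)+H(X\mid Z)$ and $H(Y)\ge H(Y\mid Z)$ give the cancellation of the common term $(1-\delta)H(r)$; and the observation that $(p-q)_+$ and $(q-p)_+$ have disjoint, nonempty supports when $\delta>0$ is exactly what yields $H(p')\le\log(n-1)$ and hence the sharp constant $\delta\log(n-1)+h(\delta)$, which is what the cited inequality states. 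Specializing to $p=\pi$ (note the lemma's $H(\mu)$ is a typo for $H(\pi)$, which you implicitly corrected) and using $h(\delta)\le\log 2$ recovers the second, cruder bound used in the paper. What your approach buys is transparency and independence from the quantum-information literature; what the citation buys the authors is brevity and the general (von Neumann entropy) statement, neither of which is needed here. The only points worth stating explicitly in a written-up version are the degenerate cases $\delta\in\{0,1\}$ and $n=1$, which you already flag and which are immediate.
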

\begin{proof}
The first inequality is Equation 11 of \cite{A07} and the second inequality is clear.
\end{proof}

\begin{proof}[Proof of Theorem \ref{thm: lower bound}]
Because $\iota$ is a bijection, $H(X)=H(\iota (X))$. Furthermore, if $\mu_p$ denotes the distribution given by reducing $\mu$ mod $p$, then $H(\mu_p)\leq H(\mu)$. Then by subadditivity of entropy, $H(K^n(x,\cdot))\leq n H(\mu)$. Thus, Lemma \ref{lem: entropy} gives
\begin{equation*}
    \|K^n(x,\cdot)-\pi\|_{TV}\geq 1-\frac{nH(\mu)+\log 2}{\log p}.
\end{equation*}
\end{proof}

\begin{remark}
\label{rmk: necc of finite entropy}
We note that the finite entropy assumption cannot be removed entirely. Fix any constant $\delta \in (0,1)$. Consider the measure on $\Z$ given by
\begin{equation*}
    \mu(i)\propto\frac{1}{i (\log i)^{2-\delta}}.
\end{equation*}
There is some constant $c=c(\delta)>0$ such that for all $p$ and $i\le p$, 
\begin{equation*}
    \sum_{k\in\Z}\mu(i+kp)\geq \sum_{k\geq p}\frac{1}{kp(\log(kp))^{2-\delta}}\geq \frac{c}{p(\log p)^{1-\delta}}.
\end{equation*}
Then on $\Fp$, the walk defined by \eqref{eq: walk defn} satisfies 
\begin{equation*}
    \P(X_{n+1}=x|X_n=y)\geq \frac{c}{p(\log p)^{1-\delta}}
\end{equation*}
for all $x$ and $y$, and so we can couple two copies of the walk, $X_n$ and $X_n'$, so that independent of the initial states,
\begin{equation*}
    \P(X_{n}\neq X'_{n})\leq \left(1-\frac{c}{(\log p)^{1-\delta}}\right)^n.
\end{equation*}
Since the distance to stationarity can be controlled by this probability (see \cite[Corollary 5.5]{LP17}), this implies that
\begin{equation*}
    \|K^n(x,\cdot)-\pi\|_{TV}\leq \left(1-\frac{c}{(\log p)^{1-\delta}}\right)^n,
\end{equation*}
and so $t_{mix}(\ee)=O_\ee((\log p)^{1-\delta})$, which is much smaller than $\log p$. 

For positive integers $k$, define $\log^{(k)}$ iteratively by $\log^{(1)}(x)=\log x$ and $\log^{(k)}(x) = \log (\log ^{(k-1)}(x))$ for $k\ge 2$. The measure
\[
    \mu(i)\propto\frac{1}{i (\log i) (\log^{(2)} i)\cdots (\log^{(k-1)} i) (\log^{(k)} i)^{2}}
\]
gives a probability distribution on $\Z$ with infinite entropy for which the mixing time $t_{mix}(\ee) = O_{\ee}(\log^{(k)} p)$.
\end{remark}

\section{Comparison theory}
\label{sec: comparison}
Comparison theory for Markov chains was introduced by Diaconis and Saloff-Coste in \cite{DSC93a, DSC93b}. This theory allows the spectral gaps (and also log-Sobolev constants) of different Markov chains on the same state space to be compared. Unfortunately, since we wish to compare a Markov chain on $\Fp$ with one on $\P^1(\Fp)$, this theory does not immediately apply. Smith extended these ideas in \cite{S15} to the case of random walks on state spaces $X_0\subseteq X$.

We first recall the relationship between the Dirichlet form and the spectral gap, and then explain the comparison theory developed by Smith for Markov chains on different state spaces. This is used to compare the spectral gaps of the random walks on $\P^1(\Fp)$ and $\Fp$. For a more thorough treatment and proofs, we refer the reader to \cite{S15}.

\subsection{The Dirichlet form}
Recall the following standard notions. Let $P$ be a reversible Markov chain on a finite space $X$ with stationary distribution $\pi$. Define the function
\begin{equation*}
    V_\pi(f)=\frac{1}{2}\sum_{x,y\in X}|f(x)-f(y)|^2\pi(x)\pi(y) 
\end{equation*}
and the \emph{Dirichlet form}
\begin{equation*}
    \mathcal{E}_P(f,f)=\frac{1}{2}\sum _{x,y\in X}|f(x)-f(y)|^2P(x,y)\pi(x)
\end{equation*}
for $f:X\to\R$. Then the spectral gap can be computed by
\begin{equation}
\label{eq: spec gap var}
    1-\lambda_2(P)=\inf_{\substack{f:X\to\R\\f\text{ not constant}}}\frac{\mathcal{E}_P(f,f)}{V_\pi(f)}.
\end{equation}

\subsection{Comparison theory for Markov chains on different state spaces}
Let $X_0\subseteq X$, and let $P_0$ be a Markov chain on $X_0$ and $P$ be a Markov chain on $X$, with stationary distributions $\pi_0$ and $\pi$ respectively. Assume that both are ergodic and reversible. The following results of Smith compare $V_{\pi_0}$ and $\mathcal{E}_{P_0}$ with $V_\pi$ and $\mathcal{E}_P$.

\begin{lemma}[\hspace{1sp}{\cite[Lemma 2]{S15}}]
Let $f_0:X_0\to\R$ and let $f:X\to\R$ be any extension of $f_0$. Then
\begin{equation*}
    V_{\pi_0}(f_0)\leq CV_\pi(f),
\end{equation*}
where
\begin{equation*}
    C=\sup_{x\in X_0}\frac{\pi_0(x)}{\pi(x)}.
\end{equation*}
\end{lemma}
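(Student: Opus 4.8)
The plan is to reduce both quantities $V_{\pi_0}(f_0)$ and $V_\pi(f)$ to ordinary variances, where the comparison becomes transparent. First I would record the standard identity obtained by expanding $|f(x)-f(y)|^2$ in the definition of $V_\pi$ and using $\sum_{x}\pi(x)=1$:
\begin{equation*}
    V_\pi(f)=\sum_{x\in X}f(x)^2\pi(x)-\Big(\sum_{x\in X}f(x)\pi(x)\Big)^2=\operatorname{Var}_\pi(f),
\end{equation*}
and likewise $V_{\pi_0}(f_0)=\operatorname{Var}_{\pi_0}(f_0)$. The key point is the variational description of the variance: for any probability measure $\rho$ on a finite set and any $g$,
\begin{equation*}
    \operatorname{Var}_\rho(g)=\inf_{c\in\R}\sum_x |g(x)-c|^2\rho(x),
\end{equation*}
with the infimum attained at $c=\sum_x g(x)\rho(x)$.

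Next I would exploit that $f$ restricts to $f_0$ on $X_0$, so that $f_0(x)=f(x)$ for $x\in X_0$. For an arbitrary constant $c\in\R$, writing $\pi_0(x)=\tfrac{\pi_0(x)}{\pi(x)}\pi(x)$ and using $\pi_0(x)/\pi(x)\le C$ on $X_0$, then dropping the nonnegative terms indexed by $x\in X\setminus X_0$, gives
\begin{equation*}
    \sum_{x\in X_0}|f_0(x)-c|^2\pi_0(x)=\sum_{x\in X_0}|f(x)-c|^2\frac{\pi_0(x)}{\pi(x)}\,\pi(x)\le C\sum_{x\in X}|f(x)-c|^2\pi(x).
\end{equation*}
Taking the infimum over $c\in\R$ on both ends and invoking the variational formula twice yields $\operatorname{Var}_{\pi_0}(f_0)\le C\operatorname{Var}_\pi(f)$, which is the claimed inequality $V_{\pi_0}(f_0)\le CV_\pi(f)$.

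I do not expect a genuine obstacle here: the only subtlety is the decision to pass through the variance rather than manipulating the double sum $\sum_{x,y\in X_0}|f(x)-f(y)|^2\pi_0(x)\pi_0(y)$ directly, since the naive estimate $\pi_0(x)\pi_0(y)\le C^2\pi(x)\pi(y)$ would only produce a factor $C^2$. One should also observe at the outset that, since $P$ is ergodic on the finite set $X$, its stationary distribution $\pi$ has full support, so $\pi_0(x)/\pi(x)$ is well defined for every $x\in X_0\subseteq X$ and $C<\infty$.
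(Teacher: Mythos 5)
Your proof is correct. Note that the paper itself does not prove this statement at all: it is quoted verbatim from Smith \cite[Lemma 2]{S15}, so there is no in-paper argument to compare against. Your route --- rewriting $V_\pi(f)$ as $\operatorname{Var}_\pi(f)$, invoking the variational characterization $\operatorname{Var}_\rho(g)=\inf_{c}\sum_x|g(x)-c|^2\rho(x)$, and comparing the two quadratic forms termwise via $\pi_0(x)\le C\pi(x)$ on $X_0$ before taking the infimum --- is the standard way to prove such a comparison, and you correctly identify why it yields the sharp factor $C$ rather than the $C^2$ that a naive bound on the double sum would give. The only minor remark is that the finiteness of $C$ is not even needed for the inequality to hold (if $V_\pi(f)=0$ then $f$, hence $f_0$, is constant), though your observation that ergodicity of $P$ gives $\pi$ full support is a fine way to dispose of the issue.
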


The analogous comparison result for the Dirichlet form requires further setup. For each $x\in X$, fix a probability measure $Q_x$ on $X_0$ such that $Q_x=\delta_x$ if $x\in X_0$. For any $f_0:X_0\to \R$, this defines an extension $f:X\to \R$ by
\begin{equation*}
    f(x)=\sum_{y\in X_0}Q_x(y)f(y).
\end{equation*}
Next, choose a coupling of $Q_x$ and $Q_y$ for all $x,y\in X$ for which $P(x,y)>0$ (note this is a symmetric relation). Denote these couplings by $Q_{x,y}$.

A \emph{path} in $X_0$ from $x$ to $y$ is a sequence of $x_i\in X_0$ for $i=0,\dotsc, k$ for which $x_0=x$, $x_k=y$ and $P_0(x_i,x_{i+1})>0$ for all $i$. For a path $\gamma$ from $x$ to $y$, call $x$ the \emph{initial vertex} and $y$ the \emph{final vertex}, and denote them by $i(\gamma)$ and $o(\gamma)$ respectively. Let $|\gamma|$ denote the length of the path.

A \emph{flow} on $X_0$ is a function $F$ on the set of paths in $X_0$ whose restriction to paths from $x$ to $y$ gives a probability measure for all $x$, $y$. We require a choice of flow on $X_0$. Actually it suffices to define the flow only for paths from $a$ to $b$ such that there exists $x,y \in X$ with $Q_{x,y}(a,b)>0$. For our purposes, it will suffice to choose the flow such that when restricted to paths from $a$ to $b$, it concentrates on a single path.

The following theorem compares $\mathcal{E}_{P_0}$ with $\mathcal{E}_P$ in terms of the chosen data.
\begin{theorem}[\hspace{1sp}{\cite[Theorem 4]{S15}}]
\label{thm: comparison}
Consider the setup defined above. Then for $f_0:X_0\to\R$ and $f$ the extension to $X$ defined by the measures $Q_x$, we have
\begin{equation*}
    \mathcal{E}_P(f,f)\leq \mathcal{A}\mathcal{E}_{P_0}(f_0,f_0),
\end{equation*}
where
\begin{equation*}
\begin{split}
    \mathcal{A}=&\sup_{P_0(x,y)>0}\frac{1}{P_0(x,y)\pi_0(x)}\bigg(\sum _{\gamma\ni (x,y)}|\gamma|F(\gamma)P(i(\gamma),o(\gamma))\pi(i(\gamma))
    \\&\qquad +2\sum _{\gamma\ni (x,y)}|\gamma|F(\gamma)\sum _{b\not\in X_0}Q_b(o(\gamma))P(i(\gamma),b)\pi(i(\gamma))
    \\&\qquad +\sum _{\gamma\ni (x,y)}|\gamma|F(\gamma)\sum_{\substack{a,b\not\in X_0\\P(x,y)>0}}Q_{a,b}(i(\gamma),o(\gamma))P(a,b)\pi(a)\bigg).
\end{split}
\end{equation*}
\end{theorem}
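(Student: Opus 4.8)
The plan is to reproduce the classical path-and-flow comparison argument of Diaconis--Saloff-Coste, modified to the setting $X_0\subseteq X$. Start from the definition
\[
  \mathcal{E}_P(f,f)=\frac12\sum_{x,y\in X}|f(x)-f(y)|^2P(x,y)\pi(x),
\]
and fix an edge $(x,y)$ of $P$, i.e.\ a pair with $P(x,y)>0$. Since $f$ is the $Q$-extension of $f_0$, we have $f(x)=\sum_{a\in X_0}Q_x(a)f_0(a)$ and $f(y)=\sum_{b\in X_0}Q_y(b)f_0(b)$, so marginalizing the coupling $Q_{x,y}$ of $Q_x$ and $Q_y$ lets us rewrite the increment as
\[
  f(x)-f(y)=\sum_{a,b\in X_0}Q_{x,y}(a,b)\bigl(f_0(a)-f_0(b)\bigr).
\]
By Jensen's inequality applied to the probability measure $Q_{x,y}$, $|f(x)-f(y)|^2\le\sum_{a,b}Q_{x,y}(a,b)|f_0(a)-f_0(b)|^2$. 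Next, for each pair $(a,b)$ with $Q_{x,y}(a,b)>0$ use the flow $F$ to select paths $\gamma$ in $X_0$ from $a$ to $b$; writing $f_0(a)-f_0(b)$ as the telescoping sum of $f_0$-increments along $\gamma$ and applying Cauchy--Schwarz gives $|f_0(a)-f_0(b)|^2\le|\gamma|\sum_{(u,v)\in\gamma}|f_0(u)-f_0(v)|^2$, a bound preserved under the $F(\gamma)$-average. (Here the remark following the definition of a flow is used: $F$ need only be specified on pairs $(a,b)$ that actually occur as $(i(\gamma),o(\gamma))$ with $Q_{x,y}(a,b)>0$ for some $P$-edge, and one may take $F$ concentrated on a single path for each such pair.)

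Substituting these two inequalities into $\mathcal{E}_P(f,f)$ and interchanging the order of summation, the total coefficient of $|f_0(u)-f_0(v)|^2$ for a fixed edge $(u,v)$ of $P_0$ becomes
\[
  \frac12\sum_{\gamma\ni(u,v)}|\gamma|F(\gamma)\sum_{\substack{x,y\in X\\ P(x,y)>0}}Q_{x,y}\bigl(i(\gamma),o(\gamma)\bigr)P(x,y)\pi(x).
\]
Comparing term by term with $\mathcal{E}_{P_0}(f_0,f_0)=\frac12\sum_{u,v}|f_0(u)-f_0(v)|^2P_0(u,v)\pi_0(u)$ and factoring out the largest ratio over edges $(u,v)$ gives $\mathcal{E}_P(f,f)\le\mathcal{A}\,\mathcal{E}_{P_0}(f_0,f_0)$, where $\mathcal{A}$ is the supremum over $(u,v)$ with $P_0(u,v)>0$ of the displayed coefficient divided by $P_0(u,v)\pi_0(u)$ (the two factors $\tfrac12$ cancel). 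It remains to verify that this $\mathcal{A}$ is the one written in the statement, which is the only step requiring care.

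That identification amounts to unpacking the inner sum $\sum_{x,y}Q_{x,y}(i(\gamma),o(\gamma))P(x,y)\pi(x)$ according to how many of $x,y$ lie in $X_0$, using that $Q_x=\delta_x$ whenever $x\in X_0$. When both $x,y\in X_0$ the coupling is the point mass at $(x,y)$, so only $(x,y)=(i(\gamma),o(\gamma))$ survives, contributing $P(i(\gamma),o(\gamma))\pi(i(\gamma))$ — the first term. When exactly one endpoint, say $y=b\notin X_0$, lies outside, the coupling factorizes as $\delta_{i(\gamma)}\otimes Q_b$, so $Q_{x,y}(i(\gamma),o(\gamma))=Q_b(o(\gamma))$, contributing $\sum_{b\notin X_0}Q_b(o(\gamma))P(i(\gamma),b)\pi(i(\gamma))$, with the factor $2$ coming from symmetrizing over the two orientations of such a $P$-edge via reversibility. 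When both endpoints $a,b\notin X_0$, one keeps the genuine coupling weight $Q_{a,b}(i(\gamma),o(\gamma))$, giving the third term. Assembling the three pieces yields exactly $\mathcal{A}$. The main obstacle is thus not analytic — all the inequality content is in the two Cauchy--Schwarz steps — but the careful bookkeeping of this case split and of the symmetrization/reversibility factors; everything else is a rearrangement of finite sums.
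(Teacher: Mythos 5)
Your proposal is correct, and it is essentially the argument behind the cited result: the paper itself does not prove this statement (it is quoted from Smith, Theorem 4 of \cite{S15}), and your reconstruction is the standard Diaconis--Saloff-Coste path/flow comparison adapted to $X_0\subseteq X$, which is how Smith proceeds. You also correctly isolate the only delicate point, namely that the factor $2$ in the middle term comes from folding the two orientations of an edge with exactly one endpoint outside $X_0$ into one (using reversibility of $P$ and the symmetry of $|f(x)-f(y)|^2$) \emph{before} routing, so that all such paths are oriented from the $X_0$-endpoint $i(\gamma)$ to the point $o(\gamma)$ sampled from $Q_b$; with that convention the case split matches the displayed $\mathcal{A}$ exactly.
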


Note that by \eqref{eq: spec gap var}, an immediate consequence of these comparison results is that
\begin{equation*}
    1-\lambda_2(P_0)\geq \frac{1}{C\mathcal{A}}(1-\lambda_2(P)).
\end{equation*}

\section{Upper bound}
\label{sec: proof}

In this section, we prove Theorem \ref{thm: upper bound}. The proof of the upper bound consists of three main steps. First, we control the mixing time by the spectral gap of a symmetrized random walk (Corollary \ref{cor: red to sym}). Next, we relate the spectral gap of the symmetrized walk on $\Fp$ with a walk on $\P^1(\Fp)$ (Proposition \ref{prop: comp}). Finally, we show that the spectral gap of the walk on $\P^1(\Fp)$ is of constant order (Proposition \ref{prop: spectral gap for P1}).

\begin{proof}[Proof of Theorem \ref{thm: upper bound}]
Let $P$ denote the Markov matrix encoding the step $X\mapsto X+\ee$ and let $\Pi$ denote the Markov matrix encoding $X\mapsto \inv(X)$. Note that $P^T$ encodes the step $X\mapsto X-\ee$. By Corollary \ref{cor: red to sym}, it suffices to show that
\begin{equation*}
    \lambda_2(P^T\Pi P^T P\Pi P)\leq 1-c
\end{equation*}
for some constant $c>0$, independent of $p$. 

Let $a_1,a_2\in\Z$ be two distinct elements in the support of $\mu$ and write $b=a_1-a_2$. Then in the Markov chain given by the transition matrix $P^T\Pi P^T P\Pi P$, there is $u>0$ depending only on $\mu$ such that we transition from $x$ to one of $x+b$, $x-b$, $\inv(\inv(x+a_1)+b)-a_1$ and $\inv(\inv(x+a_1)-b)-a_1$ with probability at least $u$. For example, note that $x+b=\iota(\iota(x+a_1)+a_1-a_1)-a_2$ and so there is a positive probability of moving from $x$ to $x+b$.

Then we can write $P^T\Pi P^T P\Pi P=uL_0+(1-u)L'$ where $L_0$ is the transition matrix for the random walk going from $x$ to one of
$x+b$, $x-b$, $\inv(\inv(x+a_1)+b)-a_1$ and $\inv(\inv(x+a_1)-b)-a_1$ with equal probability, and $L'$ a symmetric stochastic matrix. We have
\begin{equation*}
    \lambda_2(P^T\Pi P^T P\Pi P)\leq u\lambda_2(L_0)+1-u,
\end{equation*}
and so it suffices to show that $\lambda_2(L_0)\leq 1-c$ for some constant $c>0$.

Let $\binv:\P^1(\Fp)\to \P^1(\Fp)$ be the function defined by $\binv(x)=1/x$ if $x\ne 0,\infty$, and $\binv(0)=\infty$, and $\binv(\infty)=0$. Let $L$ denote the transition matrix for the random walk on $\P^1(\Fp)$ going from $x$ to one of $x+b$, $x-b$, $\binv(\binv(x+a_1)+b)-a_1$ and $\binv(\binv(x+a_1)-b)-a_1$ with equal probability. By Proposition \ref{prop: comp}, we can instead show
\begin{equation*}
    \lambda_2(L)\leq 1-c
\end{equation*}
for the walk defined by $L$ instead of the walk defined by $L_0$, and this is exactly given by Proposition \ref{prop: spectral gap for P1}.
\end{proof}

\subsection{Reduction to the symmetric walk}
\label{sec: red to sym}
Recall that we are interested in studying the walk $X_{n+1}=\inv(X_n)+\ee_{n+1}$ on $\Fp$, where $\ee_n$ are independent and identically distributed according to $\mu$. This random walk is non-reversible, so the first step is to relate it to a suitable symmetrization.

Let $\Pi$ denote the transition matrix for the (deterministic) walk $X\mapsto \inv(X)$ on $\Fp$ and let $P$ denote the walk $X\mapsto X+\ee$. Note that $P^T$ is also a Markov matrix and encodes the walk $X\mapsto X-\ee$. Let $\lambda_2$ be the second largest eigenvalue of $P^T\Pi P^T P\Pi P$. Note that all eigenvalues of $P^T\Pi P^T P\Pi P$ are real and non-negative.

The following result from \cite{CD20} relates the original walk $K=P\Pi$ to the symmetrized walk $P^T\Pi P^T P\Pi P$.

\begin{lemma}[\hspace{1sp}{\cite[Corollary 5.4]{CD20}}]
\label{lem: symmetrized bound}
Let $k\geq 2$ and let $x\in\R^p$ such that $\sum x_i=0$. Then
\begin{equation*}
    \|(K^T)^kx\|_2\leq \lambda_2^{(k-2)/4}\|x\|_2.
\end{equation*}
\end{lemma}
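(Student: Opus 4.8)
The plan is to relate the action of $(K^T)^k$ on the mean-zero subspace to the self-adjoint operator $M := P^T\Pi P^T P\Pi P$, whose operator norm on that subspace is exactly $\lambda_2$ since all its eigenvalues are real and lie in $[0,1]$. First I would record the basic structural facts: $K = P\Pi$, so $K^T = \Pi^T P^T = \Pi P^T$ (using that $\Pi$, being the permutation matrix of the involution $\iota$, is symmetric and orthogonal, $\Pi^T = \Pi = \Pi^{-1}$). Both $P$ and $\Pi$ have the uniform vector as a fixed vector (for $P$ this is because adding $\ee$ preserves uniform measure; for $\Pi$ because $\iota$ is a bijection), hence $M$ preserves the mean-zero subspace $V_0 = \{x : \sum x_i = 0\}$, and we work entirely inside $V_0$ from now on.

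The key computation is to express $\|(K^T)^k x\|_2^2$ in terms of $M$. Write $(K^T)^k = (\Pi P^T)^k$. I would pair up adjacent factors: since $\|(K^T)^k x\|_2^2 = \langle (K^T)^k x, (K^T)^k x\rangle = \langle x, (K^T)^{k\,T}(K^T)^k x\rangle$ and $((K^T)^k)^T = (P\Pi)^k = K^k$, this equals $\langle x, K^k (K^T)^k x\rangle$. Now I want to massage $K^k(K^T)^k = (P\Pi)^k(\Pi P^T)^k$ so that the central factors collapse: the innermost pair is $\Pi \cdot \Pi = I$, leaving $(P\Pi)^{k-1} P \cdot P^T (\Pi P^T)^{k-1}$, i.e. $(P\Pi)^{k-1} (PP^T) (\Pi P^T)^{k-1}$. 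The cleaner route, which is what I would actually carry out, is to bound $\|(K^T)^k x\|_2$ by iterating a one-step-type inequality built from $M$: one shows that for mean-zero $y$, $\|(K^T)^4 y\|_2 \le \lambda_2^{1/2}\|y\|_2$ — wait, rather, using $M = P^T \Pi P^T P \Pi P$, one identifies $M$ with a product that appears inside $(K^T)^k(K^k)$ after the cancellations, and then uses $\|M\|_{V_0} = \lambda_2$ together with $\|P\|, \|P^T\|, \|\Pi\| \le 1$ (all are contractions in $\ell^2$: stochastic matrices and a permutation) to absorb the leftover $\lfloor$ boundary $\rfloor$ factors. Concretely, after the inner $\Pi\Pi = I$ cancellation one is left with a word in $P, P^T, \Pi$ of length $4k-2$; grouping it as a product of $\lfloor (k-2)/2 \rfloor$ or so copies of $M$ (each contributing a factor $\lambda_2$ to the squared norm, hence $\lambda_2^{1/2}$ to the norm) plus $O(1)$ leftover contraction factors yields $\|(K^T)^k x\|_2 \le \lambda_2^{(k-2)/4}\|x\|_2$; the exponent $(k-2)/4$ rather than something cleaner is precisely the bookkeeping cost of the non-reversibility and the length-$6$ symmetrized word.

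The main obstacle is the exponent bookkeeping: one must carefully track how many full copies of the six-letter word $P^T\Pi P^T P\Pi P$ can be extracted from $\langle x, K^k(K^T)^k x\rangle = \langle x, (P\Pi)^k(\Pi P^T)^k x\rangle$ after cancelling the central $\Pi^2 = I$, and verify that the residual factors are genuine $\ell^2$-contractions on $V_0$ so they can simply be dropped. Since $P$ is doubly stochastic (its columns sum to $1$ because $\mu$ is a probability measure and translation is a bijection) it is an $\ell^2$-contraction, and likewise $P^T$ and the permutation $\Pi$; this is what makes the "throw away the boundary terms" step legitimate. Everything else — that $M$ is self-adjoint with spectrum in $[0,1]$, that it preserves $V_0$, that $\|M\|_{V_0} = \lambda_2$ — is either stated in the excerpt or immediate, so the write-up is really just the careful matching of a word of length $4k-2$ against powers of a word of length $6$ and one clean application of $\langle x, M^j x\rangle \le \lambda_2^j \|x\|_2^2$.
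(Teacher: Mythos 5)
First, note that the paper itself does not prove this lemma: it is imported verbatim from \cite[Corollary 5.4]{CD20}, so the only issue is whether your argument stands on its own. As written it does not, because the one concrete step you commit to is impossible. After the central cancellation $\Pi\Pi=I$ you correctly arrive at $K^k(K^T)^k=(P\Pi)^{k-1}(PP^T)(\Pi P^T)^{k-1}$, but in this word every occurrence of $P$ lies to the left of every occurrence of $P^T$, and the unique adjacent pair is $PP^T$, not $P^TP$. The six-letter word $M=P^T\Pi P^TP\Pi P$ has $P^T$'s preceding $P$'s, so it never appears as a consecutive block anywhere in $(P\Pi)^{k-1}(PP^T)(\Pi P^T)^{k-1}$; hence the proposed grouping into roughly $\lfloor (k-2)/2\rfloor$ copies of $M$, which is what is supposed to produce the factor $\lambda_2^{(k-2)/2}$ for the quadratic form, cannot be carried out even in principle. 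This is not ``bookkeeping'': it is the heart of the lemma, and the hedged phrase ``one identifies $M$ with a product that appears inside $(K^T)^k K^k$ after the cancellations'' is exactly the step that fails.

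The gap is repairable by one idea you never state: pass from the eigenvalue of the symmetrized word to the singular value of the half-word. Write $A=P\Pi P$, so that $M=A^TA$. Since $P$, $P^T$ and $\Pi$ are doubly stochastic they preserve $V_0=\{x:\sum x_i=0\}$, and on $V_0$ one gets $\|A\|_{V_0}=\|A^T\|_{V_0}=\lambda_2^{1/2}$ (the restriction of $A^T$ to the invariant subspace $V_0$ is the adjoint of the restriction of $A$, and $\|A|_{V_0}\|^2=\|M|_{V_0}\|=\lambda_2$ because $M$ is symmetric, positive semidefinite and doubly stochastic). Now work directly with $(K^T)^k=(\Pi P^T)^k=\Pi\,(P^T\Pi)^{k-1}P^T$: after dropping the leading isometry $\Pi$, the word $(P^T\Pi)^{k-1}P^T$ groups into $\lfloor k/2\rfloor$ blocks $P^T\Pi P^T=A^T$ separated by single $\Pi$'s, with at most one leftover $P^T$; all leftover factors are $\ell^2$-contractions preserving $V_0$, so $\|(K^T)^kx\|_2\le\lambda_2^{\lfloor k/2\rfloor/2}\|x\|_2\le\lambda_2^{(k-2)/4}\|x\|_2$ since $0\le\lambda_2\le1$. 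Your preliminary observations ($\Pi^T=\Pi=\Pi^{-1}$, double stochasticity of $P$, invariance of $V_0$, $\|M\|_{V_0}=\lambda_2$) are all correct and are exactly the ingredients this corrected grouping needs, but without the factorization $M=A^TA$ and the passage to $\|A^T\|_{V_0}$ your proof does not go through.
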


This result means that if $P^T\Pi P^T P\Pi P$ can be shown to have a spectral gap of constant order, then the walk $K$ mixes in order $\log p$ steps. The following is an easy consequence of Lemma \ref{lem: symmetrized bound} (see Equation 5.5 in \cite{CD20}).
\begin{corollary}
\label{cor: red to sym}
Let $\pi$ denote the uniform measure on $\Fp$. Then for all $x\in\Fp$,
\begin{equation*}
    \|K^k(x,\cdot)-\pi\|_{TV}\leq \frac{\sqrt{p}}{2}\lambda_2^{(k-2)/4}.
\end{equation*}
\end{corollary}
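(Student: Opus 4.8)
The plan is the standard $\ell^2$-to-total-variation argument, with Lemma \ref{lem: symmetrized bound} as the only input; this is exactly the reasoning behind Equation 5.5 of \cite{CD20}.

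First I would reduce to an $\ell^2$ estimate. For probability vectors $\mu,\nu$ on $\Fp$ we have $\|\mu-\nu\|_{TV}=\tfrac12\|\mu-\nu\|_1$, and Cauchy--Schwarz gives $\|\mu-\nu\|_1\le\sqrt p\,\|\mu-\nu\|_2$; taking $\mu=K^k(x,\cdot)$ and $\nu=\pi$ yields
\begin{equation*}
    \|K^k(x,\cdot)-\pi\|_{TV}\le\frac{\sqrt p}{2}\,\|K^k(x,\cdot)-\pi\|_2 .
\end{equation*}
Next I would rewrite the right-hand side in a form to which Lemma \ref{lem: symmetrized bound} applies. Let $\delta_x$ be the indicator vector at $x$. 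Viewed as a column vector, the $x$-th row of $K^k$ is $(K^T)^k\delta_x$, and since the uniform distribution is stationary we have $K^T\pi=\pi$, so
\begin{equation*}
    K^k(x,\cdot)-\pi=(K^T)^k(\delta_x-\pi).
\end{equation*}
The vector $v=\delta_x-\pi$ has coordinate sum $1-1=0$, so Lemma \ref{lem: symmetrized bound} (with $k\ge 2$) gives $\|(K^T)^kv\|_2\le\lambda_2^{(k-2)/4}\|v\|_2$. Finally I would compute $\|v\|_2^2=(1-1/p)^2+(p-1)/p^2=1-1/p\le 1$ and chain the three bounds to conclude.

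I do not expect a genuine obstacle here: the statement is flagged as an easy consequence of Lemma \ref{lem: symmetrized bound}. The only places to be careful are the transpose bookkeeping (that the $x$-th row of $K^k$ equals $(K^T)^k\delta_x$, and that $\pi$ is fixed by $K^T$) and checking that $\delta_x-\pi$ has zero coordinate sum so that the lemma is applicable; the restriction $k\ge 2$ in that lemma costs nothing since the corollary is only used for large $k$.
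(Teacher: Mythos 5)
Your proposal is correct and is exactly the argument the paper intends: the paper defers to Equation 5.5 of \cite{CD20}, which is the same Cauchy--Schwarz reduction from total variation to $\ell^2$, applied to $K^k(x,\cdot)-\pi=(K^T)^k(\delta_x-\pi)$ together with Lemma \ref{lem: symmetrized bound}. The bookkeeping you flag (transpose convention, $K^T\pi=\pi$ by double stochasticity, zero coordinate sum, $\|\delta_x-\pi\|_2\le 1$) is all handled correctly.
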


\subsection{Comparison of the random walks on \texorpdfstring{$\P^1(\Fp)$}{P1(Fp)} and \texorpdfstring{$\Fp$}{Fp}}
We now explain how to apply the comparison results to bound the spectral gap of the walk on $\Fp$ in terms of the spectral gap of the walk on $\P^1(\Fp)$.

Recall from the proof of Theorem \ref{thm: upper bound} that we denote by $L_0$ the transition matrix of the random walk on $\Fp$ which moves from $x\in \Fp$ to one of $x+b$, $x-b$, $\inv(\inv(x+a_1)+b)-a_1$ and $\inv(\inv(x+a_1)-b)-a_1$ with equal probability. We denote by $L$ the transition matrix of the walk on $\P^1(\Fp)$ which moves from $x\in \Fp$ to one of $x+b$, $x-b$, $\binv(\binv(x+a_1)+b)-a_1$ and $\binv(\binv(x+a_1)-b)-a_1$. Here, recall that $\binv(x)=1/x$ if $x\ne 0,\infty$, and $\binv(0)=\infty$, and $\binv(\infty)=0$. The following lemma is useful in constructing the data required in Theorem \ref{thm: comparison}.

\begin{lemma}
\label{lem: graph inclusion}
For all $x,y\in \Fp$ except $x=-a_1$ and $y=-a_1$, if $L(x,y)>0$, then $L_0(x,y)>0$.
\end{lemma}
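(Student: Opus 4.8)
The plan is to compare the four moves of $L$ and $L_0$ one at a time, using that $\inv$ and $\binv$ agree on $\Fp^\times$ and differ only in that $\inv(0)=0$ whereas $\binv(0)=\infty$. The translation moves $x\mapsto x\pm b$ are literally identical for the two chains, so the only moves that can differ are the two inversion moves, namely $x\mapsto\binv(\binv(x+a_1)\pm b)-a_1$ for $L$ versus $x\mapsto\inv(\inv(x+a_1)\pm b)-a_1$ for $L_0$ (here $b\ne 0$ in $\Fp$ since $p$ is large).

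First I would handle the generic case $x\ne -a_1$. Then $x+a_1\ne 0$, so $\binv(x+a_1)=\inv(x+a_1)=(x+a_1)^{-1}$, and I put $w=(x+a_1)^{-1}\pm b\in\Fp$. If $w\ne 0$, then $\binv(w)=\inv(w)=w^{-1}$, so the corresponding inversion moves of $L$ and $L_0$ both have target $w^{-1}-a_1\in\Fp$; in particular any $y\in\Fp$ obtained this way from $L$ is obtained from $L_0$ as well. If instead $w=0$, then $\binv(w)=\infty$ and the inversion move of $L$ lands at $\infty-a_1=\infty\notin\Fp$, so it contributes nothing to $L(x,y)$ for $y\in\Fp$ (the corresponding $L_0$-move lands at $-a_1$, but this is merely an extra edge of $L_0$ and causes no trouble). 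Together with the translation moves, this shows that for $x\ne -a_1$ every transition of $L$ staying in $\Fp$ is a transition of $L_0$, hence $L(x,y)>0\Rightarrow L_0(x,y)>0$.

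It remains to treat $x=-a_1$, which I would do by direct computation: $\binv(x+a_1)=\binv(0)=\infty$, so $\binv(x+a_1)\pm b=\infty$ and $\binv(\infty)-a_1=-a_1$, so both inversion moves of $L$ send $-a_1$ to $-a_1$. Thus from $-a_1$ the chain $L$ moves only to $-a_1+b$, $-a_1-b$ or $-a_1$, whereas $L_0$ moves to $-a_1+b$, $-a_1-b$, $b^{-1}-a_1$ or $-b^{-1}-a_1$ (using $\inv(0)=0$). Hence for $y\ne -a_1$ with $L(-a_1,y)>0$ we must have $y\in\{-a_1+b,-a_1-b\}$, which is reached by $L_0$; the only remaining transition, $L(-a_1,-a_1)>0$, is exactly the excluded pair. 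There is no real obstacle here: the argument is a short case check, and the only subtle points are that a vanishing intermediate value $w$ makes $L$ escape to $\infty$ (and hence imposes no constraint), and that the genuine discrepancy is the self-loop of $L$ at $-a_1$, produced by the first application of the inversion and absent from $L_0$.
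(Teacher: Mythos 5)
Your proof is correct and follows essentially the same route as the paper: a direct case comparison of the four moves, with the generic agreement of $\inv$ and $\binv$ plus a check of the exceptional values. The paper phrases the exceptions as the three starting points $x=-a_1,\,b^{-1}-a_1,\,-b^{-1}-a_1$ and leaves the verification to the reader, while you absorb the latter two into the generic case by observing that a vanishing intermediate value sends the $L$-move to $\infty$ and hence imposes no constraint on transitions within $\Fp$; this is the same argument with the details filled in.
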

\begin{proof}
First, note that if $x\neq -a_1, b^{-1}-a_1,-b^{-1}-a_1$, then the statement is clear since in this case the functions $\inv$ and $\binv$ are identical for the transitions involved. It can be checked that if $x=b^{-1}-a_1$ or $x=-b^{-1}-a_1$, and $y \ne \infty$ and $L(x,y) > 0$, then $L_0(x,y)>0$. Similarly, we can check that if $x=-a_1$ and $L(x,y)>0$ then $L_0(x,y)>0$ unless $y=-a_1$.
\end{proof}

\begin{proposition}
\label{prop: comp}
Let $L_0$ and $L$ denote the transition matrices for the random walk on $\Fp$ and $\P^1(\Fp)$ respectively. Then there exists an absolute constant $c>0$ such that
\begin{equation*}
    1-\lambda_2(L_0)\geq c(1-\lambda_2(L)).
\end{equation*}
\end{proposition}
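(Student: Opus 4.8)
The plan is to apply Smith's comparison theorem (Theorem \ref{thm: comparison}) with $X=\P^1(\Fp)$, $X_0=\Fp$, $P=L$ and $P_0=L_0$. Both walks are reversible with respect to the uniform measure on their state spaces (each is symmetric), so $\pi$ is uniform on $\P^1(\Fp)$ and $\pi_0$ is uniform on $\Fp$; hence the constant $C=\sup_{x\in X_0}\pi_0(x)/\pi(x)=(p+1)/p\le 2$ in Smith's Lemma 2 is bounded. It remains to exhibit the data $(Q_x)$, the couplings $(Q_{x,y})$, and the flow $F$, and to check that the resulting constant $\mathcal{A}$ in Theorem \ref{thm: comparison} is bounded independently of $p$; then $1-\lambda_2(L_0)\ge (C\mathcal{A})^{-1}(1-\lambda_2(L))$ gives the claim.

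The key choices are as follows. For $x\in\Fp=X_0$ set $Q_x=\delta_x$ as required. The only point of $X$ outside $X_0$ is $\infty$, so we must choose $Q_\infty$, a probability measure on $\Fp$. A natural choice is to let $Q_\infty$ be supported on (or uniform over) the preimages under the $L$-transitions of the ``bad'' point: since $\binv(\infty)=0$, the point $\infty$ is reached from $x$ exactly when $\binv(x+a_1)\pm b$ forces $x+a_1=\mp b^{-1}$, i.e. from $x=-b^{-1}-a_1$ or $x=b^{-1}-a_1$; so one can take $Q_\infty$ to be (say) the point mass at $-a_1$, or uniform on $\{-a_1\}$, chosen so that the extension $f$ of $f_0$ behaves well. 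For the couplings $Q_{x,y}$ with $L(x,y)>0$: if neither $x$ nor $y$ is $\infty$ then $Q_x,Q_y$ are point masses and the coupling is forced; if $y=\infty$ (so $x\in\{-b^{-1}-a_1,\,b^{-1}-a_1\}$) we couple $\delta_x$ with $Q_\infty=\delta_{-a_1}$, i.e. $Q_{x,\infty}(x,-a_1)=1$. Finally, by Lemma \ref{lem: graph inclusion}, for every pair $(a,b)$ arising with $Q_{x,y}(a,b)>0$ — which is either a pair $(x,y)$ with $L(x,y)>0$, $x,y\ne -a_1$, or a pair $(x,-a_1)$ with $x\in\{\pm b^{-1}-a_1\}$ — the pair lies in an $L_0$-edge or is joined to one by a single $L_0$-step; so we may choose the flow $F$ to concentrate, for each such $(a,b)$, on a single $L_0$-path of length $1$ or $2$. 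In particular $|\gamma|\le 2$ for every path in the support of $F$.

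With these choices, each of the three sums defining $\mathcal{A}$ is a sum over the (at most two) paths $\gamma$ through a fixed edge $(x,y)$, and we must divide by $L_0(x,y)\pi_0(x)=\frac14\cdot\frac1p$. Every factor $P(i(\gamma),o(\gamma))=L(\cdot,\cdot)$ is either $0$ or $\frac14$, every $\pi(i(\gamma))=\frac1{p+1}$, and every $Q_b(o(\gamma))$, $Q_{a,b}(i(\gamma),o(\gamma))$ is $0$ or $1$; the number of paths through a given edge is $O(1)$ because the walks have bounded degree $4$ and the flow is supported on finitely many short paths. Hence each term is $O\!\big(\frac{1}{(1/4)(1/p)}\cdot\frac{1}{4}\cdot\frac{1}{p+1}\big)=O(1)$, and $\mathcal{A}=O(1)$. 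I expect the main obstacle to be the bookkeeping in this last step: one has to verify that for \emph{every} $L_0$-edge $(x,y)$ only boundedly many flow-paths pass through it — i.e. that the finitely many exceptional edges near $-a_1$, $\pm b^{-1}-a_1$ and $\infty$ don't create an unbounded pile-up — and to handle the degenerate small primes where $b$, $b^{-1}$, or $-a_1$ collide (absorbed into ``sufficiently large $p$'' or an adjustment of the absolute constant $c$). Once the edge-congestion is shown to be $O(1)$, the bound on $\mathcal{A}$, and hence the proposition, follows.
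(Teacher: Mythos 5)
Your proposal is correct and follows essentially the same route as the paper's proof: Smith's comparison theorem with $Q_x=\delta_x$, a single auxiliary measure $Q_\infty$, flows concentrated on paths of length at most $2$, $C\le 2$, and a bounded-congestion argument giving $\mathcal{A}=O(1)$; the paper's only (cosmetic) difference is that it takes $Q_\infty$ uniform on $\{b^{-1}-a_1,\,-b^{-1}-a_1\}$ and routes the exceptional pairs through $-a_1$, rather than your $Q_\infty=\delta_{-a_1}$. One small point of care: that $(\pm b^{-1}-a_1,\,-a_1)$ are genuine $L_0$-edges is a direct computation using $\iota(0)=0$ (it does not follow from Lemma \ref{lem: graph inclusion}, which passes from $L$-edges to $L_0$-edges), but it does hold, so your flow is well-defined and the argument goes through.
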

\begin{proof}
We begin by defining the data needed to apply Theorem \ref{thm: comparison}. Take $X_0=\Fp$ and $X=\P^1(\Fp)$. Define for $x\in \Fp$, $Q_x=\delta_x$ and $Q_\infty$ is uniform on the set of $y\in \Fp$ such that $L(\infty,y)>0$. Take all couplings to be independent, so $Q_{x,y}(x_0,y_0)=Q_x(x_0)Q_y(y_0)$. Then note that the condition that $Q_{x,y}(x_0,y_0)>0$ for some $x,y\in X$ is exactly that either $L(x_0,y_0)>0$, or $L(x_0,\infty)>0$ and $L(y_0,\infty)>0$.

By Lemma \ref{lem: graph inclusion}, if $L(x_0,y_0)>0$, then $L_0(x_0,y_0)>0$ except when $x_0=-a_1$ and $y_0=-a_1$. Thus, if $L(x_0,y_0)>0$, we pick our flow $F$ to be concentrated on the path consisting of the single edge $(x_0,y_0)$, except when $x_0=y_0=-a_1$ where we take the path of length 2 given by $-a_1\mapsto b^{-1}-a_1\mapsto -a_1$.

If $L(x_0,\infty)>0$ and $L(y_0,\infty)>0$, then there must be a path of length $2$ connecting $x_0$ to $y_0$. This is because $x_0,y_0\in \{b^{-1}-a_1,-b^{-1}-a_1\}$ and so we can take our flow to be concentrated on the path $x_0\mapsto -a_1\mapsto y_0$. Thus, our flow is concentrated entirely on paths of lengths $1$ and $2$.

We can now compute the constants $C$ and $\mathcal{A}$. It's easy to see that $C\leq 2$. To bound $\mathcal{A}$, note that for each $(x,y)$ for which $L_0(x,y)>0$, there is at most one path $\gamma$ of length $1$ on which $F(\gamma)\neq 0$ containing $(x,y)$, namely $\gamma=(x,y)$, and there is a bounded number of paths $\gamma$ of length $2$ containing $(x,y)$ for which $F(\gamma)>0$. Thus, for any $(x,y)$ for which $L_0(x,y)>0$, there are a bounded number of terms in the summation in the definition of $\mathcal{A}$, each of bounded size. Hence, $\mathcal{A}$ is 
bounded above by a constant. This implies that
\begin{equation*}
    1-\lambda_2(L_0)\geq c(1-\lambda_2(L))
\end{equation*}
for some absolute constant $c>0$.
\end{proof}
\begin{remark}
For a reversible Markov chain $P$ with stationary distribution $\pi$, let
\begin{equation*}
    \phi(P)=\min_{\pi(S)\leq 1/2}\frac{\sum_{x\in S, y\in S^c}\pi(x)P(x,y)}{\pi(S)}
\end{equation*}
denote the \emph{bottleneck ratio} (sometimes also called the \emph{Cheeger constant} or \emph{conductance}). We note that a constant order spectral gap for $L_0$ could also be derived using Cheeger's inequality (see \cite[Theorem 13.10]{LP17} for example), which states that
\begin{equation*}
    \frac{\phi(P)^2}{2}\leq 1-\lambda_2(P)\leq 2\phi(P).
\end{equation*}
The bottleneck ratios of $L_0$ and $L$ can be easily compared, which would give a uniform lower bound for $1-\lambda_2(L_0)$ using the one for $1-\lambda_2(L)$. We prefer to use comparison theory because the argument can be more easily generalized. In particular, it is more robust and would give a sharper result if $L_0$ did not have a constant order spectral gap.
\end{remark}

\subsection{Spectral gap for random walk on \texorpdfstring{$\P^1(\Fp)$}{P1(Fp)}}
In this section, we prove that the random walk on $\P^1(\Fp)$ has a constant order spectral gap. 

Recall that $\SL_2(\Fp)$ has a transitive action on $\P^1(\Fp)$, viewed as lines in $\Fp^2$. This is through linear fractional transformations, and may be formally defined by
\begin{equation}\label{linear trans}
\begin{pmatrix}
a & b  \\
c & d \\
\end{pmatrix} \cdot x = \begin{cases} \frac{a x + b  }{cx +d}&\text{if $cx+d \neq 0$ and $x\in \Fp$} \\
\infty &\text{if $cx+d = 0$ and $x \in \Fp$}\\
\frac{a}{c}&\text{if $x  = \infty$ and $c \neq 0$}\\
\infty &\text{if $x = \infty$ and $c = 0$}.  
\end{cases}
\end{equation}

The random walk on $\P^1(\Fp)$ given by moving from $x$ to one of $x+b$, $x-b$, $\binv(\binv(x+a_1)+b)-a_1$ and $\binv(\binv(x+a_1)-b)-a_1$ uniformly at random can be viewed as the quotient of a random walk on $\SL_2(\Fp)$. The following result of Bourgain and Gamburd gives a constant order spectral gap for random walks on $\SL_2(\Fp)$.
\begin{theorem}[\hspace{1sp}{\cite[Theorem 1]{BG}}]
\label{thm: SL2 spec gap}
Let $S\subseteq \SL_2(\Z)$ be a symmetric set which generates a non-elementary subgroup of $\SL_2(\Z)$. Let $S_p$ denote the set of generators mod $p$ for a prime $p$. Let $P$ denote the transition matrix for the random walk on $\SL_2(\Fp)$ defined by
\begin{equation*}
    X_{n+1}=X_n\ee_{n+1},
\end{equation*}
where the $\ee_i$ are independent and uniformly distributed on $S_p$. Then for all primes $p$ large enough,
\begin{equation*}
    \lambda_2(P)\leq 1-c
\end{equation*}
for some constant $c>0$ independent of $p$.
\end{theorem}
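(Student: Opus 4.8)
The plan is to invoke the \emph{Bourgain--Gamburd machine}; this is a deep result, and I only sketch the strategy. Write $G=\SL_2(\Fp)$, $N=|G|\asymp p^3$, and let $\mu$ be the uniform probability measure on $S_p$. Since $S$ is symmetric, $\mu$ is symmetric, $P^t(x,y)=\mu^{*t}(x^{-1}y)$, and $\operatorname{tr}(P^{2t})=N\,\mu^{*2t}(e)=N\|\mu^{*t}\|_2^2$. Passing from $\mu$ to $\tfrac12(\delta_e+\mu)$ makes $P$ lazy (hence $\lambda_2(P)\ge 0$) at the cost of only a constant factor in the final gap, so I assume this. The whole problem then reduces to producing some $t=O(\log p)$ with $\|\mu^{*t}\|_2^2\le N^{-1+\delta}$ for a small fixed $\delta>0$.

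The reduction to this $\ell^2$ bound uses \emph{quasirandomness}: by Frobenius, every nontrivial irreducible representation of $\SL_2(\Fp)$ has dimension at least $(p-1)/2$, so every eigenvalue of $P$ other than $1$ has multiplicity $\gg p$. Hence $\tfrac{p-1}{2}\lambda_2(P)^{2t}\le\operatorname{tr}(P^{2t})=N\|\mu^{*t}\|_2^2\le N^{\delta}$, and taking $\delta<1/3$ and $t\asymp\log p$ forces $\lambda_2(P)\le 1-c$.

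To obtain the $\ell^2$ bound I would combine two ingredients. First, \emph{non-concentration at the initial scale}: here the hypothesis that $\langle S\rangle$ is non-elementary enters. A product of $\ell$ generators has matrix entries of height $e^{O(\ell)}$, so the reduction map $\langle S\rangle\to\SL_2(\Fp)$ is injective on the ball of radius $\ell\ll\log p$; combining this with non-amenability of $\langle S\rangle$ (a non-elementary subgroup of $\SL_2$ contains a non-abelian free group by the Tits alternative, hence is non-amenable) and Kesten's bound on return probabilities gives $\sup_{g}\mu^{*t_0}(g)\le p^{-\delta_0}$ for some $t_0=c_0\log p$. Second, \emph{flattening}: if $\|\mu^{*t}\|_2^2=N^{-\kappa}$ with $\delta\le\kappa\le 1-\delta$, then by the Balog--Szemer\'edi--Gowers theorem I would extract $A\subseteq G$ with $|A|\asymp\|\mu^{*t}\|_2^{-2}$ and $|A^3|\le|A|^{1+\varepsilon}$; Helfgott's product theorem in $\SL_2(\Fp)$ then forces $A$ to be essentially contained in a proper subgroup $H<G$. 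As the proper subgroups of $\SL_2(\Fp)$ are, up to conjugacy and bounded index, contained in a Borel subgroup or of order $O(p)$ or $O(1)$, concentration of $\mu^{*t}$ near a coset of such an $H$ would contradict the initial-scale estimate together with the Zariski-density of $\langle S\rangle$. Hence in fact $\|\mu^{*2t}\|_2^2\le N^{-\varepsilon}\|\mu^{*t}\|_2^2$. Iterating this $O(\log p)$ times starting from $t_0$ reaches $\|\mu^{*t_1}\|_2^2\le N^{-1+\delta}$ with $t_1=O(\log p)$, which by the quasirandomness step completes the proof.

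The main obstacle is the flattening step, and inside it Helfgott's product theorem for $\SL_2(\Fp)$, which rests on the sum--product phenomenon in $\Fp$; translating it into the probabilistic setting via Balog--Szemer\'edi--Gowers and ruling out concentration near each family of proper subgroups is the remaining delicate part. For the concrete generating set arising in this paper --- namely $\begin{pmatrix}1&\pm b\\0&1\end{pmatrix}$ together with a conjugate of $\begin{pmatrix}1&0\\ \pm b&1\end{pmatrix}$ by a translation --- one checks that $\langle S\rangle$ is conjugate in $\SL_2(\Z)$ to $\left\langle\begin{pmatrix}1&b\\0&1\end{pmatrix},\begin{pmatrix}1&0\\b&1\end{pmatrix}\right\rangle$, which is non-elementary (free of rank $2$ when $|b|\ge 2$ by Sanov's theorem, and equal to $\SL_2(\Z)$ when $|b|=1$); so the hypothesis of the theorem holds and the machine applies.
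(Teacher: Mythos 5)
This statement is not proved in the paper at all: it is quoted from Bourgain--Gamburd \cite{BG}, and your outline---quasirandomness via the Frobenius bound $\dim \geq (p-1)/2$, non-concentration at scale $c\log p$ from entry growth plus non-amenability and Kesten, and $\ell^2$-flattening via Balog--Szemer\'edi--Gowers together with Helfgott's product theorem and non-concentration on cosets of proper (virtually abelian) subgroups---is a faithful sketch of precisely the argument of that cited source, with the genuinely hard steps deferred to the literature just as the paper defers the entire theorem to \cite{BG}. Your closing remark is also correct and worth noting: conjugating the generating set by the translation $\begin{pmatrix}1 & -a_1\\ 0 & 1\end{pmatrix}$ (which fixes the two upper-triangular generators) identifies $\langle S\rangle$ with $\left\langle\begin{pmatrix}1 & b\\ 0 & 1\end{pmatrix},\begin{pmatrix}1 & 0\\ b & 1\end{pmatrix}\right\rangle$, non-elementary by Sanov's theorem for $|b|\geq 2$ and equal to $\SL_2(\Z)$ for $|b|=1$, which gives an alternative, arguably cleaner, proof of the paper's Lemma \ref{lem: Zariski dense} than the mod-$p$ generation argument via Weigel's criterion used there.
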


To show that this applies to the random walk we wish to study, we need the following lemma.
\begin{lemma}
\label{lem: Zariski dense}
Let
    \begin{equation*}
    S=\left\{\begin{pmatrix}
1 & b  \\
0 & 1\\
\end{pmatrix} , \begin{pmatrix}
1-a_1 b & -a_1^2 b  \\
b & a_1 b+1 \\
\end{pmatrix},\begin{pmatrix}
1 & -b  \\
0 & 1\\
\end{pmatrix} , \begin{pmatrix}
1 + a_1 b & a_1^2 b  \\
-b & 1 - a_1b  \\
\end{pmatrix}\right\}.
\end{equation*}
Then $S$ generates a non-elementary subgroup of $\SL_2(\Z)$.
\end{lemma}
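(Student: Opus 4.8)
The plan is to reduce $S$ to two generators, recognise those two generators as parabolic elements of $\SL_2(\Z)$ with distinct fixed points on $\P^1$, and then invoke the classification of elementary subgroups of $\operatorname{PSL}_2(\R)$.

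First, I would observe that the four matrices of $S$ form two inverse pairs. Writing $A=\begin{pmatrix}1&b\\0&1\end{pmatrix}$ and $B=\begin{pmatrix}1-a_1b&-a_1^2b\\ b&a_1b+1\end{pmatrix}$ for the first two matrices listed, a one-line computation gives $\det B=1$, $\operatorname{tr}B=2$, and shows that the third matrix of $S$ is $A^{-1}$ and the fourth is $B^{-1}$. Hence $\langle S\rangle=\langle A,B\rangle$. Since $b=a_1-a_2\neq 0$, neither $A$ nor $B$ is the identity, while $\operatorname{tr}A=\operatorname{tr}B=2$; thus $A$ and $B$ are nontrivial parabolic elements, in particular of infinite order.

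Second, I would compute their fixed points for the action on $\P^1$ by linear fractional transformations: $A$ acts by $x\mapsto x+b$, whose unique fixed point is $\infty$, while the fixed-point equation $B\cdot x=x$ reduces to $b(x+a_1)^2=0$, so $B$ has unique fixed point $-a_1$. As $\infty\neq -a_1$, the group $\langle A,B\rangle$ contains two parabolic elements with distinct fixed points.

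Finally, I would conclude: a subgroup $\Gamma\leq \operatorname{PSL}_2(\R)$ is elementary if and only if it fixes a point of $\overline{\mathbb{H}}=\mathbb{H}\cup\P^1(\R)$ or preserves a two-point subset of $\P^1(\R)$. A group containing the parabolic $A$ cannot fix an interior point of $\mathbb{H}$ (the stabiliser of such a point consists entirely of elliptic elements); if it fixed a point of $\P^1(\R)$, that point would have to be fixed by both $A$ and $B$, forcing $\infty=-a_1$; and it cannot preserve a two-point set $\{z_0,z_1\}\subset\P^1(\R)$, since $A$ would have to permute it, yet a nontrivial parabolic and its square each fix exactly one point of $\P^1$, so neither fixing nor swapping $z_0,z_1$ is possible for a genuine pair. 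Hence $\langle S\rangle=\langle A,B\rangle$ is non-elementary (equivalently, Zariski dense in $\SL_2$). As an alternative one could instead invoke the criterion of \cite{W96} quoted above, reducing $S$ modulo a suitably chosen prime $p\ge 5$ and verifying $\langle S_p\rangle=\SL_2(\Fp)$ directly; the geometric argument has the advantage of being uniform in $a_1$ and $b$. The two preliminary steps are routine bookkeeping, and the only real content is the last step — the standard fact that two parabolics with distinct fixed points generate a non-elementary group — so I do not anticipate a genuine obstacle beyond stating that classification cleanly.
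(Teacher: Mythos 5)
Your proof is correct, but it takes a genuinely different route from the paper. The paper verifies non-elementarity through the finite characterization already quoted in its introduction: by Weigel's criterion (via Theorem 2.2/2.5 of the cited reference), it suffices to reduce mod a single prime $p\ge 5$ with $p>|b|$ and check that $S_p$ generates $\SL_2(\Fp)$, which the authors do by explicitly generating all upper-triangular unipotents $\begin{pmatrix}1&t\\0&1\end{pmatrix}$, then all matrices with lower-left entry $b$ and $-b$, and finally all of $\SL_2(\Fp)$. You instead argue directly in $\SL_2(\Z)\subseteq\SL_2(\R)$: after noting $S=\{A,A^{-1},B,B^{-1}\}$ (your computations of $\det B=1$, $\operatorname{tr}B=2$, and the two inverses are correct), you exhibit $A$ and $B$ as nontrivial parabolics with distinct fixed points $\infty$ and $-a_1$ on $\P^1(\R)$ (the fixed-point equation for $B$ indeed reduces to $b(x+a_1)^2=0$ since $b=a_1-a_2\neq 0$), and invoke the standard classification of elementary subgroups to rule out a fixed point in $\mathbb{H}$, a common fixed boundary point, or an invariant two-point set; each exclusion is argued correctly. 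The one point worth being careful about is definitional: the classification you quote is the "finite orbit on $\mathbb{H}\cup\P^1(\R)$" notion of elementary, and you should note (or cite) that for the discrete group $\langle A,B\rangle\le\SL_2(\Z)$ this coincides with the notion of non-elementary required in the Bourgain--Gamburd hypothesis (equivalently, not virtually cyclic, equivalently Zariski dense, as the paper records from the literature); with that equivalence in hand your argument is complete. The trade-off: your geometric argument is uniform in $a_1,b$ and avoids any mod-$p$ computation, while the paper's argument is elementary linear algebra over $\Fp$ and leans only on the Weigel characterization it has already cited, producing the hypothesis in exactly the form used downstream.
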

\begin{proof}
By Theorem 2.5 of \cite{R10}, non-elementary subgroups are the same as Zariski-dense subgroups in $\SL_2(\Z)$. By a result of Weigel \cite{W96} (see also Theorem 2.2 of \cite{R10}), it suffices to show that after reducing mod $p$ for some $p\geq 5$, $S_p$ generates $\SL_2(\Fp)$. 

If $b \ne 0 \pmod p$, then the matrices $\begin{pmatrix}
1 & b  \\
0 & 1\\
\end{pmatrix}$ and $\begin{pmatrix}
1 & -b  \\
0 & 1\\
\end{pmatrix}$ generate all matrices of the form $\begin{pmatrix}
1 & t  \\
0 & 1\\
\end{pmatrix}$ for $t\in \Fp$. 

Using $\begin{pmatrix}
1 & t  \\
0 & 1\\
\end{pmatrix}$ and $\begin{pmatrix}
1-a_1 b & -a_1^2 b  \\
b & a_1 b+1 \\
\end{pmatrix}$, we can generate all matrices of the form $$\begin{pmatrix}
1 & t'  \\
0 & 1\\
\end{pmatrix} \cdot \begin{pmatrix}
1-a_1 b & -a_1^2 b  \\
b & a_1 b+1 \\
\end{pmatrix} \cdot 
\begin{pmatrix}
1 & t  \\
0 & 1\\
\end{pmatrix}.$$ In particular, we can generate all matrices in the subset $X_b$ of $\SL_2(\Fp)$ of matrices whose lower left corner equal to $b$. 

Similarly, $S_p$ generates matrices in the subset $X_{-b}$ of $\SL_2(\Fp)$ of matrices with lower left corner equal to $-b$, using $\begin{pmatrix}
1 + a_1 b & a_1^2 b  \\
-b & 1 - a_1b  \\
\end{pmatrix}$ instead of $\begin{pmatrix}
1-a_1 b & -a_1^2 b  \\
b & a_1 b+1 \\
\end{pmatrix}$. 

Finally, we can easily check that $X_b$ and $X_{-b}$ generate $\SL_2(\Fp)$. Thus, $S_p$ generates $\SL_2(\Fp)$ for all $p>|b|$, and hence, $S$ generates a non-elementary subgroup of $\SL_2(\Z)$. 
\end{proof}

Since the random walk on $\P^1(\Fp)$ is a quotient of a random walk on $\SL_2(\Fp)$, we can obtain the desired bound on the spectral gap.
\begin{proposition}
\label{prop: spectral gap for P1}
Let $L$ denote the transition matrix for the random walk on $\P^1(\Fp)$ which moves from $x\in \Fp$ to one of $x+b$, $x-b$, $\binv(\binv(x+a_1)+b)-a_1$ and $\binv(\binv(x+a_1)-b)-a_1$ uniformly at random. Then
\begin{equation*}
    \lambda_2(L)\leq 1-c
\end{equation*}
for some constant $c>0$ independent of $p$.
\end{proposition}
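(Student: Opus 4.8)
The plan is to realize the walk defined by $L$ on $\P^1(\Fp)$ as the Schreier graph associated to the action \eqref{linear trans} of $\SL_2(\Fp)$ on $\P^1(\Fp)$, and then transfer the spectral gap from the Cayley graph of $\SL_2(\Fp)$ supplied by Theorem \ref{thm: SL2 spec gap}. First I would identify the four maps $x\mapsto x+b$, $x\mapsto x-b$, $x\mapsto \binv(\binv(x+a_1)+b)-a_1$, $x\mapsto \binv(\binv(x+a_1)-b)-a_1$ with linear fractional transformations; a direct computation shows these are exactly the actions of the four matrices listed in Lemma \ref{lem: Zariski dense} (the translation matrices give $x\mapsto x\pm b$, and conjugating a translation by the matrix sending $x\mapsto \binv(x+a_1)$, i.e. $\begin{pmatrix} 0 & 1 \\ 1 & a_1 \end{pmatrix}$ up to scaling, produces the other two). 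Hence $L$ is precisely the transition matrix of the projection to $\P^1(\Fp)$ of the uniform random walk on $\mathcal{G}(\SL_2(\Fp), S_p)$ with $S$ as in Lemma \ref{lem: Zariski dense}.

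Next I would invoke the standard fact that the spectral gap can only increase (never decrease) under taking a quotient/projection of a reversible random walk by a group action: if $P$ is the walk on $\SL_2(\Fp)$ and $L$ its projection onto the orbit space $\P^1(\Fp)=\SL_2(\Fp)/B$ (with $B$ the stabilizer of a point), then every eigenfunction of $L$ lifts to a $B$-invariant eigenfunction of $P$ with the same eigenvalue, so $\lambda_2(L)\le \lambda_2(P)$. This is a routine verification: for $f:\P^1(\Fp)\to\R$, the lift $\tilde f(g) = f(g\cdot x_0)$ satisfies $P\tilde f = \widetilde{Lf}$ because the walk on $\SL_2$ is by right multiplication by the uniform measure on the symmetric set $S_p$ and this commutes with the left action defining the projection; since $\mu_{S_p}$ is symmetric both chains are reversible with respect to uniform measures, so the spectral characterization \eqref{eq: spec gap var} applies and the non-constant eigenfunctions of $L$ inject into those of $P$.

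Finally I would apply Lemma \ref{lem: Zariski dense}, which says $S$ generates a non-elementary subgroup of $\SL_2(\Z)$, together with Theorem \ref{thm: SL2 spec gap}, to conclude $\lambda_2(P)\le 1-c$ for some constant $c>0$ independent of $p$ and all $p$ large enough, hence $\lambda_2(L)\le 1-c$. (For the small primes not covered, one notes there are only finitely many and the walk is ergodic on each, so the bound holds after possibly shrinking $c$; alternatively absorb them into "sufficiently large $p$" as in Theorem \ref{thm: upper bound}.) I expect the only real content to be the bookkeeping in the first step — checking that the four maps really do coincide with the four matrix actions and that the projection identification is exact rather than merely approximate (note $b$ might be divisible by $p$ for finitely many small $p$, handled as above). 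Everything else is the general principle that quotients do not hurt the spectral gap plus a black-box citation of Bourgain–Gamburd, so the main obstacle is simply making the matrix identification clean.
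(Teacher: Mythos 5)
Your proposal matches the paper's proof essentially step for step: identify the four maps with the four matrices of Lemma \ref{lem: Zariski dense} acting by linear fractional transformations, view $L$ as the projection (Schreier graph) of the walk on $\SL_2(\Fp)$ so that its spectrum is contained in that of the group walk, and conclude via Lemma \ref{lem: Zariski dense} and Theorem \ref{thm: SL2 spec gap}. The extra detail you give on lifting eigenfunctions is just an expansion of the paper's one-line spectrum-containment assertion, so the approaches are the same.
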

\begin{proof}
Note that the formulas given in \ref{linear trans} actually define a $\GL_2(\Fp)$ action. It's easy to see that $\iota$ and addition by $a\in\Fp$ are both linear frational transformations, represented by the matrices
\begin{equation*}
    \begin{pmatrix}
1 & a  \\
0 & 1\\
\end{pmatrix},
\begin{pmatrix}
0 & 1  \\
1 & 0\\
\end{pmatrix}
\end{equation*}
in $\GL_2(\Fp)$. Then we may write the function $x\mapsto x+b$ as a product of these matrices, and similarly for $x-b$, $\binv(\binv(x+a_1)+b)-a_1$ and $\binv(\binv(x+a_1)-b)-a_1$. This allows us to view the walk defined by $L$ as the quotient of the walk on $\SL_2(\Fp)$ generated by the set
\begin{equation*}
    S=\left\{\begin{pmatrix}
1 & b  \\
0 & 1\\
\end{pmatrix} , \begin{pmatrix}
1-a_1 b & -a_1^2 b  \\
b & a_1 b+1 \\
\end{pmatrix},\begin{pmatrix}
1 & -b  \\
0 & 1\\
\end{pmatrix} , \begin{pmatrix}
1 + a_1 b & a_1^2 b  \\
-b & 1 - a_1b  \\
\end{pmatrix}\right\},
\end{equation*}
with transition matrix $\widetilde{L}$. This means that the spectrum of $L$ is contained in the spectrum of $\widetilde{L}$, and in particular $\lambda_2(L)\leq \lambda_2(\widetilde{L})$. But since $S$ generates a non-elementary subgroup of $\SL_2(\Z)$ by Lemma \ref{lem: Zariski dense}, Theorem \ref{thm: SL2 spec gap} implies that
\begin{equation*}
    \lambda_2(\widetilde{L})\leq 1-c
\end{equation*}
for some constant $c>0$ independent of $p$, giving us the desired bound. 
\end{proof}

\section{Points on modular hyperbolas}
\label{sec: application}
In this section, we prove the following theorem stating that for all subsets $A\subseteq \Fp$ of size at most $p/2$, the sets $A$ and $\iota(A)$ cannot both be close to intervals. As a corollary, we obtain bounds on the number of solutions to the congruence $xy=1\pmod{p}$ lying in $I\times J\subseteq \Fp^2$ for $I$ and $J$ intervals of the same length. 

\begin{theorem}
\label{thm: application}
Let $I$ and $J$ be two intervals in $\Fp$, each of length $m\leq p/2$, and let $A\subseteq \Fp$ with $A\subseteq I$ and $\iota(A)\subseteq J$. There exists an absolute constant $\delta > 0$ such that for all $p$ and $m$ sufficiently large, $|A|\leq (1-\delta)m$. 
\end{theorem}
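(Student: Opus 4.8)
The plan is to derive the bound from the constant-order spectral gap of the walk $L$ on $\P^1(\Fp)$ established in Proposition \ref{prop: spectral gap for P1}, via Cheeger's inequality. Suppose for contradiction that $A\subseteq I$, $\iota(A)\subseteq J$, and $|A| > (1-\delta)m$ for $\delta$ small to be chosen. After translating (the walk $L$ commutes with the translations $x\mapsto x\pm b$, and the theorem is translation-invariant in an appropriate sense, though one has to be a little careful since $L$ is built from a fixed pair $a_1, a_1-b=a_2$) we may try to reduce to the case where $I$ and $J$ are both centered conveniently. The key observation is that the four moves defining $L_0$ — namely $x\mapsto x\pm b$ and $x\mapsto \inv(\inv(x+a_1)\pm b)-a_1$ — each send an interval to something close to an interval or to the image under $\iota$ of something close to an interval. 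Concretely, I would take $S = A$ (or $S = A \cup$ (a translate of $A$) to symmetrize), and bound the edge boundary $\sum_{x\in S,\,y\in S^c}\pi(x)L_0(x,y)$ from above: the translation moves $x\mapsto x\pm b$ contribute at most $O(|b|/p)$ to the boundary since $S$ is contained in an interval $I$ of length $m$ and translating an interval by $b$ only creates $O(b)$ new boundary points; the inversion-type moves $x\mapsto \inv(\inv(x+a_1)\pm b)-a_1$ send $A$ into a set which, since $\iota(A)\subseteq J$ and $J$ is an interval, again overlaps $A$ in all but $O(b)$ points (translating $\iota(A)\subseteq J$ by $\pm b$ inside the interval picture, then applying $\iota$ back). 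Thus $\phi(L_0) \le O(b/m) + O(\delta)$, which can be made smaller than any fixed constant by taking $m$ large and $\delta$ small — contradicting the lower bound $\phi(L_0)^2/2 \le 1-\lambda_2(L_0)$ together with Proposition \ref{prop: comp} and Proposition \ref{prop: spectral gap for P1}, which give $1-\lambda_2(L_0) \ge c' > 0$.

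More carefully, the bottleneck ratio requires $\pi(S)\le 1/2$, which holds since $|A|\le m\le p/2$; if $|A|=m$ exactly we may need to drop one point, which is harmless. The boundary estimate should be organized move by move. For the move $x\mapsto x+b$: the set of $x\in A$ with $x+b\notin A$ is contained in $A\setminus(A-b)$; since $A\subseteq I$ with $I$ an interval of length $m$, $A\cap(A-b)$ misses at most $|b|$ elements of $A$, so this contributes at most $|b|/p$ to $\sum_{x\in S, y\in S^c}\pi(x)L_0(x,y)$, hence at most $\frac{|b|/p}{|A|/p} = |b|/|A| = O(|b|/m)$ to $\phi$. The inversion moves are handled by noting $\inv(\inv(x+a_1)\pm b)-a_1 \in A$ fails only when $\inv(x+a_1) \pm b \notin \inv(A+a_1)$, and $\inv(A+a_1) = \iota(A) + (\text{correction})$... here one has to track how $\iota$ interacts with the shift by $a_1$; since $\iota(A)\subseteq J$ an interval of length $m$, the same interval-overlap argument shows this fails for at most $|b| + O(1)$ values of $x$ (the $O(1)$ from the points where $\inv$ and $\binv$ disagree, or where $x+a_1 = 0$). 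Summing the four moves, $\phi(L_0) \le C_1 |b|/m + C_2/m$ for absolute constants, so choosing $m$ large forces $\phi(L_0)$ below $\sqrt{2c'}$, the contradiction.

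The main obstacle I anticipate is the bookkeeping around the exceptional points: the distinction between $\iota$ on $\Fp$ and $\binv$ on $\P^1(\Fp)$, the point $x=-a_1$ where $\inv(x+a_1)$ vanishes, and the fact that "$\iota(A)$ close to an interval" must be converted into "$A+a_1$ has small inversion-boundary" through the conjugation by translation. None of these is deep, but they require care to ensure the error terms genuinely stay $O(|b|)$ rather than secretly depending on $p$; in particular one should confirm that the map $x\mapsto \inv(\inv(x+a_1)+b)-a_1$ indeed pulls back the "interval structure of $\iota(A)$" correctly, perhaps by writing it as $\tau_{-a_1}\circ\iota\circ\tau_b\circ\iota\circ\tau_{a_1}$ where $\tau_c(x)=x+c$, and checking each factor preserves "being within $O(|b|)$ of $A$ or of $\iota(A)$". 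A secondary subtlety is whether translating $A$ or $\iota(A)$ inside their intervals can be done simultaneously; if not, one works directly with $S=A$ and accepts a slightly larger but still absolute constant in the boundary bound. The corollary on $|\{(x,y)\in I\times J : xy\equiv 1\}|$ is then immediate: such solutions biject with $\{x \in I : \iota(x) \in J\} \cup \{(0, \cdot)\text{-type degeneracies}\}$, so the count is $|A|+O(1) \le (1-\delta)m + O(1) \le (1-\delta')m$ for large $m$.
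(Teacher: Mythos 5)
Your overall strategy is the same as the paper's: use the constant spectral gap of the auxiliary symmetrized walk from the proof of Theorem \ref{thm: upper bound}, apply Cheeger's inequality, take $S=A$ in the bottleneck ratio, and bound the edge boundary by interval-overlap bookkeeping. However, two points in your detailed accounting are wrong or unjustified as written. First, you drop the $O(\delta m)$ error terms. Since $A$ is only a subset of $I$ of size at least $(1-\delta)m$, not an interval, the set of $x\in A$ with $x+b\notin A$ has size up to $|b|+|I\setminus A|\le |b|+\delta m$, not $|b|$: a shifted point of $A$ can land in $I\setminus A$. The same loss occurs at every stage where you compare a set to the interval $I$ or $J$ (points can land in $J\setminus \iota(A)$). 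So the correct conclusion is $\phi\le C_1(|b|+1)/m + C_2\delta$, as in your opening paragraph, and \emph{not} your final bound $\phi\le C_1|b|/m+C_2/m$; the contradiction is reached by taking both $m$ large and $\delta$ small, never by "$m$ large" alone. Indeed the statement is false for $\delta$ close to $1$ (for $m\sim p/2$ there are genuinely $\sim m/2$ solutions), so any argument whose error terms do not involve $\delta$ cannot be right. The paper's computation keeps these terms and ends with a bound of the form $(44+19\delta m)/((1-\delta)m)\le 20\delta/(1-\delta)<\gamma'$ for $\delta$ small.

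Second, your treatment of the inversion-type moves $x\mapsto \inv(\inv(x+a_1)\pm b)-a_1$ for general $a_1$ is a real gap, not just bookkeeping. The hypothesis controls $\iota(A)\subseteq J$, but these moves pass through the intermediate set $\iota(A+a_1)$, about which the hypothesis says nothing when $a_1\neq 0$; the "same interval-overlap argument" does not apply to it. Since Theorem \ref{thm: application} makes no reference to a measure, the fix is to choose the auxiliary walk yourself: take $\mu$ uniform on $\{-1,0,1\}$ and work with $Q=P^T\Pi P^T P\Pi P$ as the paper does (or take $\mu$ supported on $\{0,1\}$ so that one may use $a_1=0$). Then every application of $\iota$ acts on a set within Hamming distance $O(\delta m)+O(1)$ of $A$ or of $\iota(A)$, each of which lies in an interval of length $m+O(1)$; since $\iota$ is a bijection it does not inflate the error sets beyond a bounded factor, and each bounded shift adds only $O(\delta m)+O(1)$, so the bookkeeping closes. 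With these two repairs your argument coincides with the paper's proof.
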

\begin{proof}
Throughout the proof, we use the notation $[-k,k]$ to denote the set of integers $\{-k,-k+1,\dots,k-1,k\}$. Let $Q=P^T\Pi P^TP\Pi P$, where $P$ denotes the transition matrix for the random walk on $\Fp$ generated by the uniform measure on $[-1,1]$ and $\Pi$ encodes the bijection $\iota$. Then $Q$ is the transition matrix of a reversible Markov chain on $\Fp$, and from the proof of Theorem \ref{thm: upper bound} it has a constant order spectral gap, i.e. for large enough $p$, $1-\lambda_2(Q)\geq \gamma$ for some absolute constant $\gamma>0$.

Assume that there exists intervals $I$, $J$ such that $A\subseteq I$, $\iota(A)\subseteq J$, and $|A|\geq (1-\delta)m$. We will show using Cheeger's inequality that this implies the spectral gap cannot be of constant order, which gives a contradiction.

Observe that $Q(x,y) \ge c > 0$ for some constant $c$ independent of $p$. Since the stationary distribution is uniform, the bottleneck ratio of the chain $X_n$ is given by 
\[
\Phi = \min_{S \subseteq \Fp, |S| \le p/2} \frac{\sum_{x\in S,y\notin S} Q(x,y)}{|S|}.
\]
By Cheeger's inequality, we have $\Phi$ is bounded below by a positive constant $\gamma'$ depending only on $\gamma$. Thus,
\[
\frac{\sum_{x\in A,y\notin A} Q(x,y)}{|A|} \ge \gamma'. 
\]

For $x\in A$, we consider $y\in \Fp$ for which $Q(x,y)>0$. Then \begin{equation*}
    y\in \iota(\iota(A+[-1,1])+[-2,2])+[-1,1].
\end{equation*}
Note that $\iota(A+[-1,1])\subseteq \iota(I+[-1,1])$, and
\begin{align*}
    \iota(I + [-1,1]) &= \iota(A \cup ((I + \{-1,+1\})\setminus A)) \\
    &= \iota(A) \cup \iota((I + \{-1,+1\})\setminus A).
\end{align*}
Let $S_1 = \iota((I + \{-1,+1\})\setminus A)$. Then $\iota(A+[-1,1])\subseteq \iota(A)\cup S_1$, and $|S_1|\leq 2+\delta m$. 

Similarly, since $\iota(A)\cup S_1 \subseteq J\cup S_1$, we have
\begin{align*}
    \iota((\iota(A)\cup S_1) + [-2,2]) &\subseteq \iota(J+[-2,2]) \cup \iota(S_1 + [-2,2]) \\
    &\subseteq A \cup \iota((J+[-2,2])\setminus \iota(A)) \cup \iota(S_1 + [-2,2]).
\end{align*}
Here, $|\iota((J+[-2,2])\setminus \iota(A))| \le 4 + \delta m$, and $|\iota(S_1 + [-2,2])| \le 5|S_1|$. Letting 
\begin{equation*}
    S_2=\iota((J+[-2,2])\setminus \iota(A)) \cup \iota(S_1 + [-2,2]),
\end{equation*}
we have
\begin{equation*}
    \iota(\iota(A+[-1,1])+[-2,2])\subseteq A\cup S_2,
\end{equation*}
where $|S_2| \le 14+6\delta m$. 

Finally, we have $y \in (A \cup S_2) + [-1,1]$, and 
\begin{align*}
    (A \cup S_2) + [-1,1]&\subseteq (I \cup S_2) + [-1,1] \\ 
    &\subseteq (I+[-1,1]) \cup (S_2 + [-1,1])\\
    &\subseteq A \cup ((I+[-1,1]) \setminus A) \cup (S_2 + [-1,1]).
\end{align*}
Let $S_3 = ((I+[-1,1]) \setminus A) \cup (S_2 + [-1,1])$, and note that $|S_3| \le 44+19\delta m$. 

Thus, if $x\in A$, $y\notin A$, and $Q(x,y)>0$, then we must have $y\in S_3$. Hence,
\[
\frac{\sum_{x\in A,y\notin A} Q(x,y)}{|A|} \le \frac{\sum_{x\in A,y\in S_3} Q(x,y)}{|A|} \le \frac{44+19\delta m}{(1-\delta)m} \le \frac{20\delta}{1-\delta}, 
\]where we assumed that $m$ is sufficiently large (in $\delta$), and used that 
\[
\sum_{x\in A,y\in S_3} Q(x,y) \le \sum_{x\in \Fp,y\in S_3} Q(y,x) = |S_3|.
\]
For $\delta$ sufficiently small, $20\delta/(1-\delta)<\gamma'$, and we have a contradiction. 
\end{proof}

The following corollary follows easily from Theorem \ref{thm: application} by taking $A=\{x\in I\mid \iota(x)\in J\}$, which away from $0$ counts the solutions of interest.
\begin{corollary}
Let $I$ and $J$ be two intervals in $\Fp$ of the same length $m\leq p/2$. There exists an absolute constant $\delta>0$ such that for sufficiently large $p$ and $m$,
\begin{equation*}
    \left|\{(x,y)\in I\times J\mid xy\equiv 1\pmod{p}\}\right|\leq (1-\delta)m.
\end{equation*}
\end{corollary}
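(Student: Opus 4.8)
The plan is to deduce the corollary from Theorem~\ref{thm: application} by choosing the set $A$ appropriately so that counting solutions to $xy \equiv 1 \pmod p$ in $I \times J$ reduces to bounding $|A|$. First I would set $A = \{x \in I \mid \iota(x) \in J\}$. By construction $A \subseteq I$ and $\iota(A) \subseteq J$, so Theorem~\ref{thm: application} applies and gives $|A| \leq (1-\delta)m$ for an absolute constant $\delta > 0$ once $p$ and $m$ are large enough.

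Next I would relate $|A|$ to the number of solutions of the congruence. For every $x \in A$ with $x \neq 0$, we have $\iota(x) = x^{-1} \in J$, so the pair $(x, x^{-1})$ lies in $I \times J$ and satisfies $x x^{-1} \equiv 1 \pmod p$; conversely, every solution $(x,y) \in I \times J$ of $xy \equiv 1 \pmod p$ has $x \neq 0$, $y = x^{-1} = \iota(x) \in J$, and $x \in I$, hence $x \in A$. Thus the map $(x,y) \mapsto x$ is a bijection between the solution set and $A \setminus \{0\}$, so
\begin{equation*}
    \left|\{(x,y)\in I\times J\mid xy\equiv 1\pmod{p}\}\right| = |A \setminus \{0\}| \leq |A| \leq (1-\delta)m,
\end{equation*}
which is exactly the claimed bound (the element $0$, if it lies in $A$, only helps, and in any case the inequality $|A\setminus\{0\}| \le |A|$ suffices).

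There is essentially no obstacle here: the only point requiring a moment's care is the bookkeeping around $x = 0$, namely that $0$ never contributes a genuine solution of $xy \equiv 1$, so discarding it loses nothing. Everything else is immediate from the definitions and from Theorem~\ref{thm: application}.
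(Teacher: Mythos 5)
Your proposal is correct and matches the paper's argument exactly: the paper also takes $A=\{x\in I\mid \iota(x)\in J\}$, applies Theorem~\ref{thm: application}, and notes that $A$ away from $0$ counts the solutions. The extra care you take with the element $0$ is fine and consistent with the paper's brief remark.
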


\section*{Acknowledgements}
The authors thank Sourav Chatterjee, Persi Diaconis, Jacob Fox, Sean Eberhard, 
Ilya Shkredov, Kannan Soundararajan, P\'eter Varj\'u, Thuy Duong Vuong, and Yuval Wigderson for their help and comments on earlier drafts. We are greatly indebted to P\'eter Varj\'u for pointing out the many useful references to the study of spectral gap of Cayley graphs of $\SL_2(\Fp)$, in particular, \cite{BG}.

\bibliographystyle{plain}
\bibliography{mix}{}

\begin{thebibliography}{10}

\bibitem{A83}
David Aldous.
\newblock Random walks on finite groups and rapidly mixing {M}arkov chains.
\newblock In {\em Seminar on probability, {XVII}}, volume 986 of {\em Lecture
  Notes in Math.}, pages 243--297. Springer, Berlin, 1983.

\bibitem{ABLS07}
Noga Alon, Itai Benjamini, Eyal Lubetzky, and Sasha Sodin.
\newblock Non-backtracking random walks mix faster.
\newblock {\em Commun. Contemp. Math.}, 9(4):585--603, 2007.

\bibitem{A07}
Koenraad M.~R. Audenaert.
\newblock A sharp continuity estimate for the von {N}eumann entropy.
\newblock {\em J. Phys. A}, 40(28):8127--8136, 2007.

\bibitem{BCS19}
Charles Bordenave, Pietro Caputo, and Justin Salez.
\newblock Cutoff at the ``entropic time'' for sparse {M}arkov chains.
\newblock {\em Probab. Theory Related Fields}, 173(1-2):261--292, 2019.

\bibitem{BQZ20}
Charles Bordenave, Yanqi Qiu, and Yiwei Zhang.
\newblock Spectral gap of sparse bistochastic matrices with exchangeable rows.
\newblock {\em Ann. Inst. Henri Poincar\'{e} Probab. Stat.}, 56(4):2971--2995,
  2020.

\bibitem{BG}
Jean Bourgain and Alex Gamburd.
\newblock Uniform expansion bounds for {C}ayley graphs of {${\rm SL}_2(\Bbb
  F_p)$}.
\newblock {\em Ann. of Math. (2)}, 167(2):625--642, 2008.

\bibitem{BGS}
Jean Bourgain, Alex Gamburd, and Peter Sarnak.
\newblock Affine linear sieve, expanders, and sum-product.
\newblock {\em Invent. Math.}, 179(3):559--644, 2010.

\bibitem{BV}
Jean Bourgain and P\'{e}ter~P. Varj\'{u}.
\newblock Expansion in {$SL_d({\bf Z}/q{\bf Z}),\,q$} arbitrary.
\newblock {\em Invent. Math.}, 188(1):151--173, 2012.

\bibitem{BreuillardGamburd}
Emmanuel Breuillard and Alex Gamburd.
\newblock Strong uniform expansion in {${\rm SL}(2,p)$}.
\newblock {\em Geom. Funct. Anal.}, 20(5):1201--1209, 2010.

\bibitem{BV19}
Emmanuel Breuillard and Péter~P. Varjú.
\newblock Cut-off phenomenon for the ax+b {M}arkov chain over a finite field,
  2019.

\bibitem{CS10}
Tsz~Ho Chan and Igor~E. Shparlinski.
\newblock On the concentration of points on modular hyperbolas and exponential
  curves.
\newblock {\em Acta Arith.}, 142(1):59--66, 2010.

\bibitem{CD20}
Sourav Chatterjee and Persi Diaconis.
\newblock Speeding up {M}arkov chains with deterministic jumps.
\newblock {\em Probab. Theory Related Fields}, 178(3-4):1193--1214, 2020.

\bibitem{CDG}
F.~R.~K. Chung, Persi Diaconis, and R.~L. Graham.
\newblock Random walks arising in random number generation.
\newblock {\em Ann. Probab.}, 15(3):1148--1165, 1987.

\bibitem{CG11}
Javier Cilleruelo and Moubariz~Z. Garaev.
\newblock Concentration of points on two and three dimensional modular
  hyperbolas and applications.
\newblock {\em Geom. Funct. Anal.}, 21(4):892--904, 2011.

\bibitem{CZ17}
Javier Cilleruelo and Ana Zumalac\'{a}rregui.
\newblock Saving the logarithmic factor in the error term estimates of some
  congruence problems.
\newblock {\em Math. Z.}, 286(1-2):545--558, 2017.

\bibitem{DSC93b}
Persi Diaconis and Laurent Saloff-Coste.
\newblock Comparison techniques for random walk on finite groups.
\newblock {\em Ann. Probab.}, 21(4):2131--2156, 1993.

\bibitem{DSC93a}
Persi Diaconis and Laurent Saloff-Coste.
\newblock Comparison theorems for reversible {M}arkov chains.
\newblock {\em Ann. Appl. Probab.}, 3(3):696--730, 1993.

\bibitem{EV}
Sean Eberhard and Péter~P. Varjú.
\newblock Mixing time of the {C}hung--{D}iaconis--{G}raham random process.
\newblock {\em Probab. Theory Related Fields}, 2020.

\bibitem{G06}
M.~Z. Garaev.
\newblock On the logarithmic factor in error term estimates in certain additive
  congruence problems.
\newblock {\em Acta Arith.}, 124(1):27--39, 2006.

\bibitem{GV}
A.~Salehi Golsefidy and P\'{e}ter~P. Varj\'{u}.
\newblock Expansion in perfect groups.
\newblock {\em Geom. Funct. Anal.}, 22(6):1832--1891, 2012.

\bibitem{GK}
Konstantin Golubev and Amitay Kamber.
\newblock Cutoff on graphs and the sarnak-xue density of eigenvalues, 2019.

\bibitem{He20}
Jimmy He.
\newblock Markov chains on finite fields with deterministic jumps, 2020.

\bibitem{Helf08}
H.~A. Helfgott.
\newblock Growth and generation in {${\rm SL}_2(\Bbb Z/p\Bbb Z)$}.
\newblock {\em Ann. of Math. (2)}, 167(2):601--623, 2008.

\bibitem{HSS20}
Jonathan Hermon, Allan Sly, and Perla Sousi.
\newblock Universality of cutoff for graphs with an added random matching,
  2020.

\bibitem{H06}
Martin Hildebrand.
\newblock On the {C}hung-{D}iaconis-{G}raham random process.
\newblock {\em Electron. Comm. Probab.}, 11:347--356, 2006.

\bibitem{H09}
Martin Hildebrand.
\newblock A lower bound for the {C}hung-{D}iaconis-{G}raham random process.
\newblock {\em Proc. Amer. Math. Soc.}, 137(4):1479--1487, 2009.

\bibitem{H19}
Martin Hildebrand.
\newblock On a lower bound for the {C}hung-{D}iaconis-{G}raham random process.
\newblock {\em Statist. Probab. Lett.}, 152:121--125, 2019.

\bibitem{LP17}
David~A. Levin and Yuval Peres.
\newblock {\em Markov chains and mixing times}.
\newblock American Mathematical Society, Providence, RI, 2017.
\newblock Second edition, With contributions by Elizabeth L. Wilmer, With a
  chapter on ``Coupling from the past'' by James G. Propp and David B. Wilson.

\bibitem{N11}
Richard Neville, III.
\newblock {\em On lower bounds of the {C}hung-{D}iaconis-{G}raham random
  process}.
\newblock ProQuest LLC, Ann Arbor, MI, 2011.
\newblock Thesis (Ph.D.)--State University of New York at Albany.

\bibitem{R10}
Igor Rivin.
\newblock Zariski density and genericity.
\newblock {\em Int. Math. Res. Not. IMRN}, (19):3649--3657, 2010.

\bibitem{S90}
Peter Sarnak.
\newblock {\em Some applications of modular forms}, volume~99 of {\em Cambridge
  Tracts in Mathematics}.
\newblock Cambridge University Press, Cambridge, 1990.

\bibitem{Selberg}
Atle Selberg.
\newblock On the estimation of {F}ourier coefficients of modular forms.
\newblock In {\em Proc. {S}ympos. {P}ure {M}ath., {V}ol. {VIII}}, pages 1--15.
  Amer. Math. Soc., Providence, R.I., 1965.

\bibitem{Shk}
I.~D. Shkredov.
\newblock Modular hyperbolas and bilinear forms of {K}loosterman sums.
\newblock {\em J. Number Theory}, 220:182--211, 2021.

\bibitem{S15}
Aaron Smith.
\newblock Comparison theory for {M}arkov chains on different state spaces and
  application to random walk on derangements.
\newblock {\em J. Theoret. Probab.}, 28(4):1406--1430, 2015.

\bibitem{W96}
Thomas Weigel.
\newblock On the profinite completion of arithmetic groups of split type.
\newblock In {\em Lois d'alg\`ebres et vari\'{e}t\'{e}s alg\'{e}briques
  ({C}olmar, 1991)}, volume~50 of {\em Travaux en Cours}, pages 79--101.
  Hermann, Paris, 1996.

\bibitem{W19}
Avi Wigderson.
\newblock {\em Mathematics and computation}.
\newblock Princeton University Press, Princeton, NJ, 2019.
\newblock A theory revolutionizing technology and science.

\end{thebibliography}
\end{document}